\theoremstyle{plain}
\newtheorem{theorem}{Theorem}[section]
\newtheorem{lemma}[theorem]{Lemma}
\newtheorem{proposition}[theorem]{Proposition}
\theoremstyle{definition}
\newtheorem{definition}[theorem]{Definition}
\newtheorem{example}[theorem]{Example}
\theoremstyle{remark}
\newtheorem{remark}{Remark}
\newtheorem{algorithm}{Algorithm}
\newcommand{\Minimize}{\rm Minimize}
\newcommand{\argmin}{\rm argmin}
\newcommand{\R}{\mathbb R}
\def \T {{\scriptscriptstyle\mathrm{T}}} 
\def \T {{\scriptscriptstyle\mathrm{T}}} 
\def\R{\mathbb R}
\begin{document}
\title{On the Frank-Wolfe algorithm  for  non-compact constrained optimization  problems    \thanks{The first author  was supported in part by  CNPq grants 305158/2014-7 and 302473/2017-3,  FAPEG/PRONEM- 201710267000532 and CAPES. The second author was supported in part by Funda\c{c}\~{a}o de Apoio \`{a} Pesquisa do Distrito Federal (FAP-DF) by the grant 0193.001695/2017 and PDE 05/2018}}
\author{
O.  P. Ferreira\thanks{Instituto de Matem\'atica e Estat\'istica, Universidade Federal de Goi\'as,  CEP 74001-970 - Goi\^ania-GO, Brazil, E-mail:{\tt  orizon@ufg.br}.}
\and
W. S. Sosa\thanks{Programa de P\'os-Gradua\c c\~ao em Economia, Universidade Cat\'olica de Bras\'ilia,  CEP 70790-160 - Bras\'ilia-DF, Brazil, E-mail: {\tt  sosa@ucb.br}.}
}

\maketitle

\maketitle
\begin{abstract}
This paper is concerned with the Frank--Wolfe algorithm for a special class of {\it non-compact} constrained optimization problems. The notion of asymptotic cone is used to introduce this class of problems as well as to establish that the algorithm is well defined. These problems, with closed and convex constraint set, are characterized by two conditions on the gradient of the objective function. The first establishes that the gradient of the objective function is Lipschitz continuous, which is quite usual in the analysis of this algorithm. The second, which is new in this subject, establishes that the gradient belongs to the interior of the dual asymptotic cone of the constraint set. Classical results on the asymptotic behavior and iteration-complexity bounds for the sequence generated by the Frank--Wolfe algorithm are extended to this new class of problems. Examples of problems with non-compact constraints and objective functions satisfying the aforementioned conditions are also provided.
\end{abstract}

\noindent
{\bf Keywords:} Frank-Wolfe method; constrained optimization problem; non-compact constraint.

\medskip
\noindent
{\bf AMS subject classification:}   90C25, 90C60, 90C30, 65K05.

\section{Introduction}

The {\it Frank--Wolfe algorithm or conditional gradient method} is one of the oldest methods for finding minimizers of differentiable functions on compact convex sets. It was initially proposed in 1956 \cite{FrankWolfe1956} for solving quadratic programming problems with linear constraints (see also \cite{DemyanovRubinov1970,LevitinPolyak1966}), and it has since attracted considerable attention owing to its simplicity and ease of implementation, as it only requires access to a linear minimization oracle over the constraint set. In particular, it allows low storage cost and readily exploits separability and sparsity; therefore, it can be effectively applied to large-scale problems. With the emergence of machine-learning applications, this method has recently gained increasing popularity \cite{Jaggi2013, LacosteJaggi2015, Lan2013}. Accordingly, it has been extensively studied, and several variants thereof have been developed \cite{BeckTeboulle2004,HarchaouiNemirovski2015,BoydRecht2017,LussTeboulle2013,FreundMazumder2017,Konnov2018,Ghadimi2019, LanZhou2016} and references therein.

The aim of this study is to extend the Frank--Wolfe algorithm to a special class of constrained optimization problems $ {\Minimize}_{x\in {C}} f(x)$, where  $f: {\mathbb R}^n \to {\mathbb R}$ is a continuously differentiable function, and $C\subset {\mathbb R}^n$ is a closed and convex but not necessarily compact set. In addition to the classical assumptions (i.e., the gradient of $f$ is Lipschitz continuous), we assume that  $\nabla f(x)\in \mbox{int}(C_{\infty})^*$ for all $x\in C$, where $\nabla f$ and $\mbox{int}(C_{\infty})^*$ denote the gradient of $f$ and the interior of the positive dual asymptotic cone of $C$, respectively. For this class of functions, with classical assumptions, we also extend previous results on the asymptotic behavior and iteration-complexity bounds for the sequence generated by the Frank--Wolfe algorithm.

This paper is organized as follows. In Section~\ref{sec:Preliminares}, notations and auxiliary results are presented. In Section~\ref{secc3}, we formulate the Frank--Wolfe algorithm. In Section~\ref{sec:wd}, we establish that the sequence generated by this algorithm is well defined. Section~\ref{sec:aa} is devoted to the study of the asymptotic-convergence properties of Algorithm~\ref{Alg:CondG}, and Section~\ref{sec4} to the study of the iteration-complexity bounds. In Section~\ref{sec-Ap}, we present some examples. We conclude the paper in Section~\ref{Sec:Conclusions}.
\section{Preliminaries} \label{sec:Preliminares}
Herein, we present notations, definitions, and auxiliary results. Let ${\mathbb R}^n$ be the $n$-dimensional Euclidean space with the usual inner product $\langle\cdot,\cdot\rangle$ and norm $\|\cdot\|$. We denote by ${\mathbb R}^{m \times n}$ the set of all $m\times n$
matrices with real entries (${\mathbb R}^n\equiv {\mathbb R}^{n \times 1}$), by $e^i$ the $i$-th canonical unit vector in ${\mathbb R}^n$, and by ${\rm I_n}$ the $n\times
n$ identity matrix. A set ${\cal K}\subseteq{\mathbb R}^{n}$ is called a {\it cone} if for any $\alpha> 0$ and $x\in \cal{K}$, we have $\alpha x\in \cal{K}$. A cone
${\cal K}\subseteq{\mathbb R}^n$ is called {\it convex} if for any $x,y \in\cal{K}$, we have $x + y \in \cal{K}$. The {\it positive dual cone} of a cone ${\cal{K}} \subseteq
{\mathbb R}^n$ is the cone ${\cal{K}}^*\!\!:=\!\{ x\in {\mathbb R}^n :~ x^{\T}y \geq\! 0, ~ \forall \, y\!\in\! {\cal{K}}\}$, and its interior is denoted by $\mbox{int} {\cal{K}}^*\!\!:=\!\{ x\in {\mathbb R}^n :~ x^{\T}y > 0, ~ \forall \, y\!\in\! {\cal{K}} {\setminus\{0\} } \}$. Let $C\subset {\mathbb R}^n$ be a closed convex set; then, we define the {\it asymptotic cone} of $C$ by
$$
C_{\infty}:=\Big{\{}d\in {\mathbb R}^n: ~ \exists ~ (t_k)_{k\in\mathbb{N}}\subset (0, \infty), ~ \exists ~(x^k)_{k\in\mathbb{N}}\subset C; \lim_{k\to \infty}t_k=0, \lim_{k\to \infty}t_kx_k=d \Big{\}},
$$
or equivalently, $C_{\infty}:=\{ d\in {\mathbb R}^n: ~ x+td\in C, \forall ~x\in C,~ \forall ~ t\geq 0\} $ \cite[pp. 39]{UrrutyLemarechal2001}. Let $f:{\mathbb R}^n\to {\mathbb R}$ be a continuously differentiable function. We consider the problem of determining an {\it optimum point} of $f$ in a closed convex set ${C} \subset {\mathbb R}^n$, that is, a point $x^*\in {C}$ such that $f(x^*) \leq f(x)$ for all $x \in {C}$. We denote this constrained problem as
\begin{equation} \label{eq:mp}
\displaystyle {\Minimize}_{x\in {C}} f(x).
\end{equation}
 The optimal value of $f$ on ${C}$ is denoted by $f^*$, that is, $f^*:=\inf_{x\in {C}} f(x)$. The {\it first-order optimality condition } for problem~\eqref{eq:mp} is stated as
\begin{equation} \label{eq:oc}
 \nabla f(x^*)^{T}(x-x^*) \geq 0 , \qquad \forall~x\in {C}.
\end{equation}
In general, the condition \eqref{eq:oc} is necessary but not sufficient for optimality. Thus, a point $x^*\in C$ satisfying condition \eqref{eq:oc} is called a {\it stationary point} to problem~\eqref{eq:mp}.
\begin{definition} \label{def:sccf}
Let $C\subset {\mathbb R}^n$ be a convex set. A function $f:{\mathbb R}^n\to {\mathbb R}$ is called $M$-strongly convex with parameter $M\geq 0$ on $C$ if the inequality $f(tx+(1-t)y) \leq tf(x) + (1-t)f(y)- \frac{1}{2}Mt(1-t)\|y-x\|^2$ holds for all $t\in[0, 1]$ and $x,y \in {C}$. In particular, for $M=0$, $ f $ is called convex rather than $0$-strongly convex.
\end{definition}
The following results provide a useful characterization of convex/strongly convex differentiable functions, see the proof in \cite[Theorem 4.1.1, pp. 183]{UrrutyLemarechal1993_305}.
\begin{proposition} \label{pr:sccf}
 Let $C\subset {\mathbb R}^n$ be a convex set, $f:{\mathbb R}^n\to {\mathbb R}$ be a continuously differentiable function, and $M\geq 0$. Then, $f$ is $M$-strongly convex in $C$ if and only if $f(y)\geq f(x)+\nabla f(x)^T (y-x)+M\|x-y\|^2/2$ for all $x, y\in C$.
\end{proposition}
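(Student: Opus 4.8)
The plan is to reduce the general $M \geq 0$ statement to the classical convex case $M = 0$ by a quadratic shift. I would introduce the auxiliary function $g:\mathbb{R}^n \to \mathbb{R}$ defined by $g(x) := f(x) - \tfrac{M}{2}\|x\|^2$, which is continuously differentiable with $\nabla g(x) = \nabla f(x) - Mx$. The key tool is the algebraic identity $t\|x\|^2 + (1-t)\|y\|^2 - \|tx + (1-t)y\|^2 = t(1-t)\|x-y\|^2$, valid for all $x,y \in \mathbb{R}^n$ and $t \in [0,1]$. Using it, the $M$-strong convexity inequality of Definition~\ref{def:sccf} for $f$ is seen to coincide exactly with the plain convexity inequality $g(tx+(1-t)y) \leq tg(x) + (1-t)g(y)$ for $g$; likewise, after expanding $\nabla g(x)^T(y-x)$ and reorganizing the quadratic terms, the claimed gradient inequality for $f$ is precisely $g(y) \geq g(x) + \nabla g(x)^T(y-x)$. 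Hence it suffices to establish the equivalence in the case $M=0$, that is, for the function $g$.

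For the forward implication ($g$ convex $\Rightarrow$ gradient inequality), I would fix $x,y \in C$ and exploit convexity along the segment: for $t \in (0,1]$ the point $x + t(y-x)$ lies in $C$ and $g(x + t(y-x)) \leq g(x) + t\big(g(y) - g(x)\big)$, so the difference quotient satisfies $\big(g(x+t(y-x)) - g(x)\big)/t \leq g(y) - g(x)$. Letting $t \to 0^+$ and using differentiability of $g$, the left-hand side tends to the directional derivative $\nabla g(x)^T(y-x)$, which yields $\nabla g(x)^T(y-x) \leq g(y) - g(x)$, the desired inequality.

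For the converse (gradient inequality $\Rightarrow g$ convex), I would fix $x,y \in C$ and $t \in [0,1]$, set $z := tx + (1-t)y \in C$ (using convexity of $C$), and apply the gradient inequality at the base point $z$ to each of $x$ and $y$. Multiplying the two resulting inequalities by $t$ and $1-t$ respectively and adding them, the gradient term collapses because $t(x-z) + (1-t)(y-z) = z - z = 0$, leaving $tg(x) + (1-t)g(y) \geq g(z)$, which is exactly convexity of $g$.

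The only genuinely analytic step is the passage to the limit in the forward implication; everything else is algebraic manipulation. The main point to handle with care is therefore the convergence of the difference quotient to $\nabla g(x)^T(y-x)$ — this is where continuous differentiability (indeed mere differentiability) of $f$, and hence of $g$, is essential, and where one uses that the whole segment $[x,y]$ stays in $C$ so that $g$ is defined there and the convexity inequality along it is available.
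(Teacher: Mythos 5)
Your proposal is correct: the quadratic-shift reduction via $g(x) := f(x) - \tfrac{M}{2}\|x\|^2$, the polarization identity $t\|x\|^2 + (1-t)\|y\|^2 - \|tx+(1-t)y\|^2 = t(1-t)\|x-y\|^2$, and the two standard arguments for the $M=0$ case (difference quotients for the forward direction, the collapsing gradient term at $z = tx+(1-t)y$ for the converse) all check out, and the boundary-point worry is moot because $f$, hence $g$, is differentiable on all of $\mathbb{R}^n$, not merely on $C$. The comparison with the paper is somewhat degenerate: the paper does not prove this proposition at all, but simply cites Theorem 4.1.1 of Hiriart-Urruty and Lemar\'echal, so your argument supplies a self-contained proof where the authors chose to defer to the literature. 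Your route is essentially the textbook one that the citation points to (strong convexity of $f$ is equivalent to plain convexity of $f - \tfrac{M}{2}\|\cdot\|^2$, and plain convexity of a differentiable function is equivalent to the gradient inequality), so it is not a genuinely different mathematical path; what it buys is self-containedness and the explicit observation that only differentiability, not continuous differentiability, and only convexity of $C$, with no openness or closedness, are needed. One small presentational point: when you invoke "convexity of $g$" you mean convexity of the restriction of $g$ to $C$ (the inequality is only assumed for points of $C$), which is exactly what the reduction from Definition~\ref{def:sccf} delivers; it is worth stating this explicitly so the reader does not think you are claiming $g$ is convex on all of $\mathbb{R}^n$.
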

\begin{remark} It is well known that if $f$ is $M$-strong convex, then \eqref{eq:oc} is sufficient for optimality, that is, any point $x^*\in C$ satisfying \eqref{eq:oc} is a minimizer to problem~\eqref{eq:mp}.
\end{remark}
The proof of the next lemma can be found in \cite[Lemma~6]{polyak1987}.
\begin{lemma}\label{le:rate}
Let $\lbrace a_{k} \rbrace $	be a nonnegative sequence of real numbers. If $\Gamma a_{k}^2 \leq a_{k} - a_{k+1}$ for some $\Gamma >0$ and for any $k=1,...,\ell$, then $a_\ell \leq a_0/(1+\Gamma a_0\ell)< 1/(\Gamma \ell)$.
\end{lemma}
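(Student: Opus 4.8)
The plan is to pass to reciprocals and telescope, which is the standard device for turning a quadratic decrease inequality into an $O(1/\ell)$ rate. First I would record the two structural facts forced by the hypothesis. Since $\Gamma a_k^2 \ge 0$, the inequality $\Gamma a_k^2 \le a_k - a_{k+1}$ gives $a_{k+1} \le a_k$, so the sequence is non-increasing on the admissible range of indices. Together with nonnegativity this settles the degenerate case at once: if $a_k = 0$ for some index, then $0 \le a_{k+1} \le a_k = 0$ forces all later terms to vanish, so $a_\ell = 0$ and the bound holds trivially. Hence I may assume $a_k > 0$ throughout, which is exactly what licenses the division in the next step.

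The key step is to bound the increments of the reciprocal sequence $1/a_k$ from below by the constant $\Gamma$. Dividing the hypothesis by the positive quantity $a_k a_{k+1}$ and using the monotonicity $a_k \ge a_{k+1}$ established above yields
$$\frac{1}{a_{k+1}} - \frac{1}{a_k} = \frac{a_k - a_{k+1}}{a_k a_{k+1}} \ge \frac{\Gamma a_k^2}{a_k a_{k+1}} = \Gamma\,\frac{a_k}{a_{k+1}} \ge \Gamma,$$
where the final inequality is where monotonicity enters, through $a_k/a_{k+1} \ge 1$.

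Summing this telescoping inequality over $k = 0, 1, \ldots, \ell-1$ collapses the left-hand side to $1/a_\ell - 1/a_0$ and gives $1/a_\ell - 1/a_0 \ge \Gamma \ell$, that is, $1/a_\ell \ge (1 + \Gamma a_0 \ell)/a_0$. Inverting both (positive) sides produces the first claimed estimate $a_\ell \le a_0/(1 + \Gamma a_0 \ell)$. The concluding strict inequality is then immediate: since $\Gamma a_0 \ell > 0$, dropping the additive $1$ strictly shrinks the denominator, so $a_0/(1 + \Gamma a_0 \ell) < a_0/(\Gamma a_0 \ell) = 1/(\Gamma \ell)$.

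I do not anticipate a genuine obstacle here; the proof is elementary once the reciprocal substitution is made. The only points requiring care are the two places where positivity and monotonicity are invoked — the division by $a_k a_{k+1}$ and the estimate $a_k/a_{k+1} \ge 1$ — and the trivial but necessary handling of the case in which the sequence reaches zero, which must be excluded before any reciprocal is formed.
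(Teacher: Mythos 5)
Your proof is correct, and it is worth noting that the paper itself offers no proof of this lemma at all: it simply cites Polyak's \emph{Introduction to Optimization} (Lemma~6 there). The argument you give---bounding $1/a_{k+1}-1/a_k$ below by $\Gamma$ via the monotonicity $a_k/a_{k+1}\ge 1$ and telescoping---is precisely the classical argument from that reference, so in effect you have supplied the standard proof the paper delegates to the literature, with the added care of handling the degenerate case $a_k=0$ explicitly before dividing, which the terse textbook versions often gloss over. One small point deserves mention: your telescoping sum runs over $k=0,1,\ldots,\ell-1$, whereas the statement as printed asserts the hypothesis only for $k=1,\ldots,\ell$; under that literal index range one could only conclude $a_\ell\le a_1/(1+\Gamma a_1(\ell-1))$, not a bound in terms of $a_0$. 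This is a defect (an index shift) in the paper's statement rather than in your proof---the conclusion involving $a_0$ makes clear the intended hypothesis is $\Gamma a_k^2\le a_k-a_{k+1}$ for $k=0,1,\ldots,\ell-1$, which is exactly what you use and what Polyak assumes.
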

\section{Frank--Wolfe algorithm} \label{secc3}
Herein, we formulate the {\it Frank--Wolfe algorithm to solve problem~\eqref{eq:mp}}. To this end, {\it we henceforth assume that the constraint set $C\subset {\mathbb R}^n$ is closed and convex (not necessarily compact), the objective function $f:{\mathbb R}^n\to {\mathbb R}$ of problem~\eqref{eq:mp} is continuously differentiable, and its gradient satisfies the following condition:}
\begin{itemize}
\item[{\bf (A)}] $\| \nabla f(x)- \nabla f(y) \| \leq L\|x-y \|$ for all ~ $x,y \in {C}$ and $L>0$.
\end{itemize}
\begin{remark}
In Section~\ref{sec-Ap}, we present examples of problem~\eqref{eq:mp} with objective function satisfying {\bf (A)}.
\end{remark}
\noindent
{\it To formulate the Frank--Wolfe algorithm, we should assume that there exists a linear-optimization oracle (LO oracle) capable of minimizing linear functions over $C$}.
\bigskip
\hrule
\hrule
\begin{algorithm} \label{Alg:CondG}
 {\bf CondG$_{{C},f}$ method} \\
\hrule
\hrule
\begin{description}
\item[0.] Select $ x^0\in {C}$. Set $k=0$.
\item [1.] Use an ``LO oracle" to compute an optimal solution $p^k$ and the optimal value $v_{k}^*$ as
\begin{equation} \label{eq:CondG}
p^k \in {\argmin}_{p\in {C}} \nabla f(x^k)^T (p-x^k), \qquad v_{k}^*:= \nabla f(x^k)^T(p^k-x^k).
\end{equation}
\item[ 2.] If $v^*_{k}=0$, then {\bf stop}; otherwise, compute the step size $\lambda_k \in (0, 1]$ as
 \begin{equation}\label{eq:fixed.step}
\lambda_k:=\mbox{min}\left\{1, \frac{|v_{k}^*|}{L \|p^k-x^k\|^2}\right\}={\argmin}_{\lambda \in (0,1]}\left \{v_{k}^* \lambda+\frac{L }{2} \|p(x^k) -x^k\|^2 \lambda^2 \right \},
\end{equation}
 and set the next iterate $x^{k+1}$ as
\begin{equation}\label{eq:iteration}
x^{k+1}:=x^k+ \lambda_k(p^k-x^k).
\end{equation}
\item[ 3.] Set $k\gets k+1$, and go to step {\bf 1}.
\end{description}
\hrule
\hrule
\end{algorithm}

We conclude this section by stating a basic inequality for functions satisfying assumption~{\bf (A)} \cite[Lemma~2.4.2]{DennisSchnabel1996}.
\begin{lemma} \label{t:descent lemma}
	Let $f: \mathbb{R}^n \to \mathbb{R}$ be a continuously differentiable function satisfying condition~{\bf (A)}, $x \in {C}$, and $\lambda \in [0,1]$. Then,
	\begin{equation} \label{e:descent lemma}
f(x+\lambda(p-x)) \leq f(x) + \nabla f(x)^{\T}(p-x) \lambda + \frac{L}{2}\|p-x\|^2\lambda^{2}, \qquad \forall~p \in {C}.
	\end{equation}
\end{lemma}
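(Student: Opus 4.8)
The plan is to obtain the estimate from the integral form of the mean value theorem, exploiting the Lipschitz bound in \textbf{(A)} to control the discrepancy between $\nabla f$ evaluated along the segment and $\nabla f(x)$.

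First I would fix $x, p \in C$ and $\lambda \in [0,1]$ and set $y := x + \lambda(p-x)$. Since $C$ is convex and $x, p \in C$, every point $x + t\lambda(p-x) = (1 - t\lambda)x + t\lambda\, p$ with $t \in [0,1]$ is a convex combination of $x$ and $p$ and hence lies in $C$. This is the one place where convexity of $C$ is used, and it guarantees that the Lipschitz estimate \textbf{(A)} is available at every point of the integration path.

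Next, by the fundamental theorem of calculus applied to $t \mapsto f(x + t(y-x))$,
\begin{equation*}
f(y) - f(x) = \int_0^1 \nabla f\big(x + t(y-x)\big)^{\T}(y-x)\, dt.
\end{equation*}
Substituting $y - x = \lambda(p-x)$ and adding and subtracting the constant term $\lambda\,\nabla f(x)^{\T}(p-x)$ inside the integral gives
\begin{equation*}
f(y) - f(x) = \lambda\, \nabla f(x)^{\T}(p-x) + \int_0^1 \big[\nabla f\big(x + t\lambda(p-x)\big) - \nabla f(x)\big]^{\T}\lambda(p-x)\, dt.
\end{equation*}

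Finally I would bound the remaining integral. By the Cauchy--Schwarz inequality and \textbf{(A)}, the integrand is at most $\|\nabla f(x+t\lambda(p-x)) - \nabla f(x)\|\,\lambda\|p-x\| \le L\,t\lambda\|p-x\|\cdot \lambda\|p-x\| = L\,t\,\lambda^2 \|p-x\|^2$; integrating and using $\int_0^1 t\,dt = 1/2$ yields the bound $\frac{L}{2}\lambda^2\|p-x\|^2$, which is exactly \eqref{e:descent lemma}. The argument is entirely routine once the path is known to lie in $C$; accordingly the only point demanding attention is the convexity step ensuring that \textbf{(A)} applies along the whole segment, and I expect no genuine obstacle to arise thereafter.
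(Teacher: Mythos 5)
Your proof is correct: the paper itself gives no proof of this lemma, simply citing the standard descent lemma (Dennis--Schnabel, Lemma~2.4.2), and your argument via the fundamental theorem of calculus, adding and subtracting $\nabla f(x)^{\T}(p-x)$, and bounding the remainder with Cauchy--Schwarz plus condition \textbf{(A)} is precisely that standard argument. Your explicit remark that convexity of $C$ is what keeps the integration segment inside the set where \textbf{(A)} holds is a worthwhile point of care, since the paper assumes the Lipschitz bound only on $C$ rather than on all of $\mathbb{R}^n$.
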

\subsection{Well-definedness} \label{sec:wd}
Herein, we establish that the sequence $(x^k)_{k\in\mathbb{N}}$ generated by Algorithm~\ref{Alg:CondG} is well defined. To this end, {\it we assume that the gradient of the objective function $f:{\mathbb R}^n\to {\mathbb R}$ of problem~\eqref{eq:mp} satisfies the following condition:}
\begin{itemize}
\item[{\bf (B)}] $\nabla f(x)\in \mbox{int}(C_{\infty})^*$, for all $x\in C$.
\end{itemize}
\begin{remark}
It follows from \cite[Proposition 2.2.3]{UrrutyLemarechal2001} that a closed convex set $C\subset {\mathbb R}^n$ is compact if and only if $C_{\infty}=\{0\}$. Then, $\mbox{int}(C_{\infty})^*={\mathbb R}^n$, and therefore {\bf (B)} holds trivially if $C$ is compact. In Section~\ref{sec-Ap}, we present examples of problem~\eqref{eq:mp} with $\nabla f$ satisfying {\bf (B)} with unbounded constraint set $C$.
\end{remark}
We now use ${\bf (B})$ to prove a general result that implies the existence of a solution to the problem \eqref{eq:CondG}. It is worth mentioning that ${\bf (A})$ is not required in the proof.
\begin{proposition} \label{pr:wdc}
The following three assertions hold:
\begin{description}
\item[(i)] For each $x\in C$, the set $D_x:=\left\{p\in C:~ \nabla f(x)^T (p-x)\leq 0\right\}$ is compact.
\item[(ii)] For each $x\in C$, the linear problem
\begin{equation} \label{eq:lp}
\displaystyle {\Minimize}_{p\in {C}} \nabla f(x)^T (p-x)
\end{equation}
 has a solution.
\item[(iii)] If $D\subset C$ is a bounded set, then the set
\begin{equation} \label{eq:minbs}
\bigcup_ {x\in D}\Big\{q_x\in C:~ q_x\in {\argmin}_{p\in {C}} \nabla f(x)^T (p-x)\Big\},
\end{equation}
 is also bounded.
\end{description}
\end{proposition}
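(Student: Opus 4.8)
The plan is to prove all three parts from condition \textbf{(B)} by combining the first (sequential) characterization of the asymptotic cone $C_\infty$ given in the preliminaries with a normalization-and-contradiction argument; note that \textbf{(A)} plays no role, as remarked before the statement. The unifying observation is that \textbf{(B)} means $\nabla f(x)^T d > 0$ for every $x\in C$ and every $d\in C_\infty\setminus\{0\}$.

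For part (i), I would first note that $D_x$ is closed, being the intersection of the closed set $C$ with the closed half-space $\{p:\nabla f(x)^T(p-x)\le 0\}$, so only boundedness needs proof. Arguing by contradiction, suppose $D_x$ is unbounded and pick $(p^j)_{j\in\mathbb{N}}\subset D_x$ with $\|p^j\|\to\infty$. Setting $t_j:=1/\|p^j\|$, the points $t_jp^j=p^j/\|p^j\|$ lie on the unit sphere, so a subsequence converges to some $d$ with $\|d\|=1$; since $p^j\in C$, $t_j\to 0$ and $t_jp^j\to d$, the sequential characterization of $C_\infty$ gives $d\in C_\infty\setminus\{0\}$. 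From $p^j\in D_x$ we have $\nabla f(x)^Tp^j\le\nabla f(x)^Tx$, so dividing by $\|p^j\|$ and letting $j\to\infty$ yields $\nabla f(x)^Td\le 0$, contradicting $\nabla f(x)\in \mbox{int}(C_\infty)^*$. Hence $D_x$ is bounded and therefore compact.

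For part (ii), the key remark is that the feasible point $p=x$ attains the objective value $0$ in problem~\eqref{eq:lp}, so the infimum over $C$ coincides with the infimum over $D_x=\{p\in C:\nabla f(x)^T(p-x)\le 0\}$. By part (i) this set is compact and nonempty (it contains $x$), and the linear objective is continuous, so the minimum is attained on $D_x$ and is a fortiori a minimizer over all of $C$.

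Part (iii) is the most delicate, and I expect it to be the main obstacle, because the base point now varies over the bounded set $D$. Proceeding by contradiction, I would choose $(x^j)\subset D$ and minimizers $q^j\in{\argmin}_{p\in C}\nabla f(x^j)^T(p-x^j)$ with $\|q^j\|\to\infty$; taking $p=x^j$ in the optimality inequality gives $\nabla f(x^j)^T(q^j-x^j)\le 0$. Since $D$ is bounded and $C$ is closed, a subsequence of $(x^j)$ converges to some $\bar x\in C$, and since $f$ is continuously differentiable, $\nabla f(x^j)\to\nabla f(\bar x)$. As in part (i), a further subsequence of $q^j/\|q^j\|$ converges to some $d\in C_\infty\setminus\{0\}$. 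Dividing $\nabla f(x^j)^Tq^j\le\nabla f(x^j)^Tx^j$ by $\|q^j\|$ and passing to the limit, using that $(\nabla f(x^j)^Tx^j)$ stays bounded (continuity of $\nabla f$ on the bounded set $D$) while $\|q^j\|\to\infty$, yields $\nabla f(\bar x)^Td\le 0$, which again contradicts \textbf{(B)} applied at $\bar x$. The two technical points to watch are that the limit point $\bar x$ remains in $C$, so that \textbf{(B)} may be invoked there, and that the continuity of $\nabla f$ is what legitimizes replacing $\nabla f(x^j)$ by $\nabla f(\bar x)$ in the passage to the limit.
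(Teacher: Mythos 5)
Your proposal is correct and follows essentially the same strategy as the paper's proof: parts (i) and (ii) coincide with it (normalization-and-contradiction via the sequential characterization of $C_\infty$ and condition \textbf{(B)}, then attainment on the compact sublevel set $D_x$), and part (iii) is the same bounded-base-point contradiction argument. The only real divergence is in (iii): the paper normalizes the differences $q_{x^k}-x^k$ and then needs the \emph{convexity} of $C$ to form points $x^k+t\left(q_{x^k}-x^k\right)\in C$ whose rescalings converge to an element of $C_\infty$, whereas you normalize $q^j$ itself, so that $q^j/\|q^j\|\to d\in C_\infty\setminus\{0\}$ follows directly from the sequential characterization, with boundedness of $D$ and continuity of $\nabla f$ only needed to make the term $\nabla f(x^j)^Tx^j/\|q^j\|$ vanish in the limit. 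Your variant is slightly leaner, since it dispenses with the convexity step; both arguments are sound and hit the same two technical points you flag ($\bar x\in C$ by closedness, and continuity of $\nabla f$ to pass to the limit in the gradient).
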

\begin{proof}
To prove (i), let $x\in C$. We assume toward a contradiction that $D_x$ is unbounded. Thus, let $(q^k)_{k\in\mathbb{N}}\subset D_x $ such that $\lim_{k\to \infty} \|q^k\|=\infty$, and let $(t_k)_{k\in\mathbb{N}}\subset (0, \infty) $ be the sequence defined by $t_k:=1/\|q^k\|$ for all $k=0,1\ldots $. Then, $ \lim_{k\to \infty}t_k=0$. As $t_k q^k=q^k/\|q^k\|$, we conclude that $\|t_kp_k\|=1$ for all $k=0, 1, \ldots$. Hence, there exist subsequences $(q^{k_j})_{j\in\mathbb{N}}\subset D_x $ and $(t_{k_j})_{j\in\mathbb{N}}\subset (0, \infty) $ such that $\lim_{k_j\to \infty}t_{k_j}q^{k_j}=d \in C_{\infty}$. Thus, the definition of $D_x$ implies
$$
 \nabla f(x)^T \left(t_{k_j}q^{k_j}- t_{k_j} x\right)=\nabla f(x)^T \left(\frac{q^{k_j}}{\|q^{k_j}\|}-\frac{x}{\|q^{k_j}\|}\right)\leq 0.
$$
Taking the limit in the last inequality as $j$ tends to $\infty$, we conclude that $ \nabla f(x)^Td\leq 0$, which is absurd. Indeed, assumption {\bf (B)} implies that  $\nabla f(x)\in \mbox{int}(C_{\infty})^*$, and as $ d \in C_{\infty}$, we have $ \nabla f(x)^Td> 0$. Therefore, (i) is proved. To prove (ii), it is sufficient to note that the problem \eqref{eq:lp} has $D_x$ as a sublevel set, which by (i) is compact. We now prove (iii). We assume toward a contradiction that the set in \eqref{eq:minbs} is unbounded. Then, there exist sequences $(x^{k})_{k\in\mathbb{N}}\subset D$ and $(q_{x^k})_{k\in\mathbb{N}}\subset C$ such that $\lim_{k\to \infty}\|q_{x^k}\|=\infty$. Thus, as $D$ is bounded, we have
\begin{equation} \label{eq:tklp}
\lim_{k\to \infty}\tau_k=0, \qquad \mbox{where} \quad \tau_k:= \frac{1}{\|q_{x^k}-x^k\|}, \qquad k=0,1, \ldots.
\end{equation}
However, as $C$ is convex, and $(x^k)_{k\in\mathbb{N}}$ and $\{q_{x^k}\}$ belong to $C$, we have
\begin{equation} \label{eq:cclp}
 x^k+ t\left(q_{x^k}-x^k\right) \in C, \qquad k=0,1, \ldots
\end{equation}
for any $t\in (0, 1)$. As $\tau_k (q_{x^k}-x^k)=(q_{x^k}-x^k)/\|q_{x^k}-x^k\|$, we have $\|\tau_k (q_{x^k}-x^k)\|=1$ for all $k=0, 1, \ldots$. Thus, there exist subsequences $(x^{k_j})_{j\in\mathbb{N}}\subset D$, $(q_{x^{k_j}})_{j\in\mathbb{N}}\subset C $, and $(t_{k_j})_{j\in\mathbb{N}}\subset (0, \infty) $ such that
\begin{equation} \label{eq:cclpc}
 \lim_{k_j\to \infty}\tau_{k_j}(q_{x^{k_j}}-x^{k_j})=v.
\end{equation}
As $(x^{k_j})_{j\in\mathbb{N}}\subset D$ and $D$ is bounded, \eqref{eq:tklp}, \eqref{eq:cclp}, and \eqref{eq:cclpc} yield
\begin{equation} \label{eq:crc}
 \lim_{k_j\to \infty}\tau_{k_j}\left[ x^{k_j} + t \left(q_{x^{k_j}}-x^{k_j}\right)\right]= \lim_{k_j\to \infty}\left[ \tau_{k_j}x^{k_j} + t \tau_{k_j} \left(q_{x^{k_j}}-x^{k_j}\right)\right]=tv \in C_{\infty}.
\end{equation}
As $q_{x^{k_j}}\in {\argmin}_{p\in {C}} \nabla f(x^{k_j})^T (p-x^{k_j})$ and $x^{k_j}\in C$, we have $ \nabla f(x^{k_j})^T (q_{x^{k_j}}-x^{k_j})\leq 0$ for all $j=0,1, \ldots$. Then, as $(t_{k_j})_{j\in\mathbb{N}}\subset (0, \infty) $, we have
$$
 \nabla f(x^{k_j})^T \left( \tau_{k_j}(q_{x^{k_j}}-x^{k_j})\right)\leq 0, \qquad j=0,1, \ldots.
$$
Considering that $D$ is bounded and $(x^{k_j})_{j\in\mathbb{N}}\subset D$, we can assume without loss of generality that $ \lim_{k_j\to \infty} x^{k_j}={\bar x}.$ Thus, taking the limit in the last inequality as $j$ tends to $\infty$ and using \eqref{eq:cclpc}, we obtain $ \nabla f({\bar x})^Tv\leq 0$, which is absurd because, by \eqref{eq:crc}, we have $ v \in C_{\infty}$, and by assumption  {\bf (B)}, we have $\nabla f({\bar x})\in \mbox{int}(C_{\infty})^*$. Therefore, the proof of (iii) is complete.
\end{proof}
In the next lemma, we establish that the sequence $(x^k)_{k\in\mathbb{N}}$ generated by Algorithm~\ref{Alg:CondG} is well defined. We also obtain results related to the optimal value $v_{k}^*$ defined in \eqref{eq:CondG}.
\begin{lemma} \label{eq;wd}
The sequences $\{p^k\}_{k\in\mathbb{N}} \subset C$ and $\{x^k\}_{k\in\mathbb{N}} \subset C$ are well defined. Moreover, the following assertions hold:
\begin{description}
\item[{(i)}] $v_{k}^* \leq 0$ for all $k=0, 1, \ldots$;
\item[{(ii)}] $v_{k}^*=0$ if and only if $x^k$ is a stationary point of problem~\eqref{eq:mp};
\item[{(iii)}] $v_{k}^*< 0$ if and only if $\lambda_k>0$ and $p^{k}\neq x^k$.
\end{description}
\end{lemma}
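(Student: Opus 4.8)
The plan is to prove well-definedness by induction on $k$, and then to deduce the three assertions directly from the definitions of $v_k^*$ in \eqref{eq:CondG} and of $\lambda_k$ in \eqref{eq:fixed.step}, together with the first-order optimality condition \eqref{eq:oc}. For the induction, the base case is immediate, since $x^0\in C$ is selected in Step~0. For the inductive step, I would assume $x^k\in C$ and invoke Proposition~\ref{pr:wdc}(ii), which guarantees that the linear subproblem in \eqref{eq:CondG} attains its minimum; hence a point $p^k\in C$ and the value $v_k^*$ are well defined. If the algorithm does not stop, then $\lambda_k\in(0,1]$ is computed from \eqref{eq:fixed.step} and $x^{k+1}=(1-\lambda_k)x^k+\lambda_k p^k$ by \eqref{eq:iteration}; since $x^k,p^k\in C$, $\lambda_k\in(0,1]$, and $C$ is convex, we obtain $x^{k+1}\in C$. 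This closes the induction and shows that $\{p^k\}\subset C$ and $\{x^k\}\subset C$ are well defined. The only nontrivial input is the existence of $p^k$, which is exactly where assumption~{\bf (B)} enters, through Proposition~\ref{pr:wdc}.

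For assertion~(i), I would simply note that $x^k$ is itself feasible for the subproblem \eqref{eq:CondG} and yields the value $\nabla f(x^k)^T(x^k-x^k)=0$; hence the minimum satisfies $v_k^*\leq 0$. For assertion~(ii), observe that, in view of~(i), the equality $v_k^*=0$ is equivalent to $\min_{p\in C}\nabla f(x^k)^T(p-x^k)=0$, that is, to $\nabla f(x^k)^T(p-x^k)\geq 0$ for every $p\in C$. This is precisely the optimality condition \eqref{eq:oc} at $x^k$, so $v_k^*=0$ if and only if $x^k$ is a stationary point of problem~\eqref{eq:mp}.

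For assertion~(iii), I would argue both implications from the definition of $\lambda_k$. If $v_k^*<0$, then $\nabla f(x^k)^T(p^k-x^k)<0$ forces $p^k\neq x^k$, so $\|p^k-x^k\|^2>0$ and $|v_k^*|>0$; consequently $|v_k^*|/(L\|p^k-x^k\|^2)>0$ and \eqref{eq:fixed.step} gives $\lambda_k>0$. Conversely, if $\lambda_k>0$ and $p^k\neq x^k$, then by~(i) we have $v_k^*\leq 0$, and were $v_k^*=0$ the formula \eqref{eq:fixed.step} would return $\lambda_k=\min\{1,0\}=0$, contradicting $\lambda_k>0$; hence $v_k^*<0$.

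I do not expect any serious obstacle here: the entire content of well-definedness is carried by Proposition~\ref{pr:wdc}(ii), and the three assertions follow immediately from the definitions. The only point requiring a little care is the backward direction of~(iii), where one must read $\lambda_k$ off the formula \eqref{eq:fixed.step} in order to rule out the case $v_k^*=0$.
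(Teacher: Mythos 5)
Your proof is correct and follows essentially the same route as the paper's: well-definedness via Proposition~\ref{pr:wdc} combined with induction and the convexity of $C$, assertion (i) from the feasibility of $x^k$ in the subproblem \eqref{eq:CondG}, assertion (ii) by identifying $v_k^*=0$ with the optimality condition \eqref{eq:oc}, and assertion (iii) from the step-size formula \eqref{eq:fixed.step}. The only difference is that you spell out both directions of (iii) explicitly, whereas the paper dismisses them as immediate; your added detail is sound.
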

\begin{proof}
For each $x^k\in C$, it follows from Proposition~\ref{pr:wdc} that $p^ k$ and $v_{k}^*$ in \eqref{eq:CondG} can be computed and $p^ k\in C$. As $x^0\in C$ and $0\leq \lambda\leq 1$, by using \eqref{eq:iteration} and an inductive argument, we conclude that  $(p^k)_{k\in\mathbb{N}}$, $(v_{k}^*)_{k\in\mathbb{N}}$, and $(x^k)_{k\in\mathbb{N}}$ are well defined, and that $(p^k)_{k\in\mathbb{N}}$ and $(x^k)_{k\in\mathbb{N}}$ belong to $C$. To prove (i), it suffices to note that the optimality in \eqref{eq:CondG} implies $v_{k}^*\leq \nabla f(x^k)^T (x^k-x^k)=0$ for all $k=0, 1, \ldots$. To prove(ii), we note that \eqref{eq:CondG} implies that $v_{k}^*\leq \nabla f(x^k)^T (p-x^k)$ for all $p\in C$. Thus, if $v_{k}^*=0$, we conclude that $0\leq \nabla f(x^k)^T (p-x^k)$ for all $p\in C$. Hence, $x^k$ satisfies \eqref{eq:oc}, that is, $x^k$ is a stationary point of problem~\eqref{eq:mp}. Conversely, if $x^k$ is a stationary point of problem~\eqref{eq:mp}, then  \eqref{eq:oc} implies $0\leq \nabla f(x^k)^T (p-x^k)$ for all $p\in C$. As $p^ k\in C$, we conclude that $0\leq \nabla f(x^k)^T (p^k-x^k)=v_{k}^*$. Therefore, by (i), we have $v_{k}^*=0$. We now prove (iii). It is immediate from (i) and \eqref {eq:fixed.step} that $v_{k}^*< 0$ if and only if $\lambda_k>0$ and $p^{k}\neq x^k$; this concludes the proof.
\end{proof}

It follows from Lemma~\ref{eq;wd} that Algorithm~\ref{Alg:CondG} generates either an infinite sequence or a finite sequence $\{x^k\}_{k\in\mathbb{N}} \subset C$, the last iterate of which is a stationary point of problem~\eqref{eq:mp}. {\it Henceforth, let $\{p^k\}_{k\in\mathbb{N}} \subset C$ and $\{x^k\}_{k\in\mathbb{N}} \subset C$ be sequences generated by Algorithm~\ref{Alg:CondG}; we assume that these sequences are infinite.}
\subsection{Asymptotic convergence analysis}\label{sec:aa}
Herein, we study the asymptotic convergence of Algorithm~\ref{Alg:CondG}. We first prove an important inequality.
\begin{lemma} \label{le:dss}
The following inequality holds: 	
\begin{equation} \label{e.sufficient decrease}
f(x^{k+1}) \leq f(x^{k}) - \frac{1}{2} |v_{k}^*| \lambda_k, \qquad k=0,1, \ldots.
\end{equation}
Consequently, $f(x^k) > f(x^{k+1})$ for all $k=0,1, \ldots$.
\end{lemma}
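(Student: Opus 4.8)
The plan is to apply the descent lemma (Lemma~\ref{t:descent lemma}) at the current iterate and then exploit the explicit form of the step size $\lambda_k$ in \eqref{eq:fixed.step}. Setting $x=x^k$, $p=p^k$, and $\lambda=\lambda_k$ in \eqref{e:descent lemma}, and recalling from \eqref{eq:iteration} that $x^{k+1}=x^k+\lambda_k(p^k-x^k)$ together with the definition $v_k^*=\nabla f(x^k)^T(p^k-x^k)$ from \eqref{eq:CondG}, I obtain
\begin{equation*}
f(x^{k+1})\le f(x^k)+v_k^*\lambda_k+\frac{L}{2}\|p^k-x^k\|^2\lambda_k^2.
\end{equation*}
Since $v_k^*\le 0$ by Lemma~\ref{eq;wd}(i), I write $v_k^*=-|v_k^*|$, so that establishing \eqref{e.sufficient decrease} reduces to showing $\frac{L}{2}\|p^k-x^k\|^2\lambda_k^2\le\frac{1}{2}|v_k^*|\lambda_k$.

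I would then split the argument according to which of the two terms realizes the minimum in \eqref{eq:fixed.step}. In the first case, $\lambda_k=|v_k^*|/(L\|p^k-x^k\|^2)\le 1$, which gives $L\|p^k-x^k\|^2\lambda_k=|v_k^*|$; substituting this into $\frac{L}{2}\|p^k-x^k\|^2\lambda_k^2=\frac{1}{2}\bigl(L\|p^k-x^k\|^2\lambda_k\bigr)\lambda_k$ yields exactly $\frac{1}{2}|v_k^*|\lambda_k$, so the target inequality holds with equality. In the second case, $\lambda_k=1$, which forces $L\|p^k-x^k\|^2\le|v_k^*|$; since $\lambda_k^2=\lambda_k=1$, the target inequality becomes precisely $\frac{L}{2}\|p^k-x^k\|^2\le\frac{1}{2}|v_k^*|$, i.e.\ the defining condition of this case. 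Combining the two cases gives \eqref{e.sufficient decrease}.

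For the final assertion, I would invoke the standing assumption that the generated sequences are infinite, so the stopping test in Step~2 is never triggered and hence $v_k^*\ne 0$ for every $k$. Lemma~\ref{eq;wd}(i) then upgrades this to $v_k^*<0$, i.e.\ $|v_k^*|>0$, while Lemma~\ref{eq;wd}(iii) guarantees $\lambda_k>0$. Consequently $\frac{1}{2}|v_k^*|\lambda_k>0$, and \eqref{e.sufficient decrease} yields the strict decrease $f(x^k)>f(x^{k+1})$ for all $k$. The only mildly delicate point is the two-case treatment of the step size, but both cases collapse to the single inequality $L\|p^k-x^k\|^2\lambda_k\le|v_k^*|$, so I do not anticipate any genuine obstacle here.
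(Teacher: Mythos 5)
Your proposal is correct and follows essentially the same route as the paper: apply Lemma~\ref{t:descent lemma} at $x=x^k$, $p=p^k$, $\lambda=\lambda_k$, then split into the two cases of the minimum in \eqref{eq:fixed.step}, with the $\lambda_k=1$ case handled via the implied bound $L\|p^k-x^k\|^2\le|v_k^*|$. The paper's proof is the same argument, differing only in presentation (it substitutes directly into the descent inequality rather than isolating the quadratic term), and it likewise derives the strict decrease from $v_k^*<0$ under the standing assumption that the sequence is infinite.
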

 \begin{proof}
Let $x^k\in {C}$ be defined as in Algorithm~\ref{Alg:CondG}, and $v_{k}^*$ as in \eqref{eq:CondG}. We first recall that we have assumed that $(x^k)_{k\in\mathbb{N}}$ is infinite. Thus, Lemma~\ref{eq;wd} implies that  $v_{k}^*< 0$ and $p^k\neq x^k$. Applying Lemma~\ref{t:descent lemma} with $x = x^k$, $p=p^k$, and $\lambda = \lambda_k$, we have
\begin{equation} \label{e.f.i}
f(x^{k+1}) \leq f(x^k) + v_{k}^* \lambda_k + \frac{L}{2}\|p^k-x^k\|^2\lambda_k^{2}.
\end{equation}
We separately consider two cases: $\lambda_k= |v_{k}^*|/{(L \|p^k-x^k\|^2)}$ and $\lambda_k = 1$. In the former, it follows from \eqref{e.f.i} that
\begin{equation} \label{eq:fifr}
f(x^{k+1}) \leq f(x^{k}) - \frac{1}{2} |v_{k}^*| \lambda_k.
\end{equation}
If now $\lambda_k = 1$, then \eqref{e.f.i} becomes $f(x^{k+1}) \leq f(x^k) - |v_{k}^*| + L\|p^k-x^k\|^2/2$, and \eqref{eq:fixed.step} yields $\lambda_k=1 \leq |v_{k}^*|/(L \|p^k-x^k\|^2)$. Thus, we obtain
$ f(x^{k+1}) \leq f(x^k) - (|v_{k}^*|/2)\lambda_k.$
Therefore, combining this inequality with \eqref{eq:fifr} yields \eqref{e.sufficient decrease}. As we assumed that $v_{k}^*< 0$ for all $k=0, 1, \ldots$, the second part follows, and the proof is complete.
 \end{proof}
The next result shows a partial asymptotic-convergence property of the Frank--Wolfe algorithm; it requires neither convexity nor strong convexity on $f$.

\begin{theorem} \label{th:gr}
Each limit point ${\bar x}\in C$ of $(x^k)_{k\in\mathbb{N}}$ is stationary for the problem~\eqref{eq:mp}.
\end{theorem}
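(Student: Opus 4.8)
The plan is to argue by contradiction, using the sufficient-decrease inequality \eqref{e.sufficient decrease} to drive the optimality gaps $v_k^*$ to zero along a subsequence, and then transferring stationarity to the limit point via continuity of the linear-minimization value. Let $\bar x\in C$ be a limit point of $(x^k)_{k\in\mathbb{N}}$ and fix a subsequence $(x^{k_j})_{j\in\mathbb{N}}$ with $\lim_{j\to\infty}x^{k_j}=\bar x$. First I would establish that the full scalar sequence $(f(x^k))_{k\in\mathbb{N}}$ converges: by Lemma~\ref{le:dss} it is strictly decreasing, while continuity of $f$ and $x^{k_j}\to\bar x$ give $f(x^{k_j})\to f(\bar x)$, and a decreasing sequence with a convergent subsequence converges to the same limit, so $f(x^k)\to f(\bar x)$. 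Telescoping \eqref{e.sufficient decrease} over $k=0,\dots,N-1$ yields $\tfrac12\sum_{k=0}^{N-1}|v_k^*|\lambda_k\le f(x^0)-f(x^N)$, and letting $N\to\infty$ gives $\sum_{k=0}^{\infty}|v_k^*|\lambda_k\le 2\big(f(x^0)-f(\bar x)\big)<\infty$. In particular $|v_k^*|\lambda_k\to 0$, hence $|v_{k_j}^*|\lambda_{k_j}\to 0$.

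Next I would upgrade this to $v_{k_j}^*\to 0$. The convergent set $\{x^{k_j}\}$ is bounded, so Proposition~\ref{pr:wdc}(iii) guarantees that $\{p^{k_j}\}$ is bounded; fix $R>0$ with $\|p^{k_j}-x^{k_j}\|\le R$ for all $j$. From \eqref{eq:fixed.step} we have $\lambda_{k_j}=\min\{1,\,|v_{k_j}^*|/(L\|p^{k_j}-x^{k_j}\|^2)\}$, whence $|v_{k_j}^*|\lambda_{k_j}=\min\{|v_{k_j}^*|,\,(v_{k_j}^*)^2/(L\|p^{k_j}-x^{k_j}\|^2)\}\ge\min\{|v_{k_j}^*|,\,(v_{k_j}^*)^2/(LR^2)\}$. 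Since the left-hand side tends to $0$, so does $\min\{|v_{k_j}^*|,(v_{k_j}^*)^2/(LR^2)\}$; if $|v_{k_j}^*|$ stayed bounded away from $0$ along a further subsequence this minimum would too, a contradiction, so $v_{k_j}^*\to 0$.

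Finally I would conclude that $\bar x$ is stationary by contradiction. If it were not, then by \eqref{eq:oc} there would exist $q\in C$ with $\nabla f(\bar x)^T(q-\bar x)<0$. Continuity of $\nabla f$ and $x^{k_j}\to\bar x$ give $\nabla f(x^{k_j})^T(q-x^{k_j})\to\nabla f(\bar x)^T(q-\bar x)<0$, so for large $j$ this quantity is at most some $-\delta<0$. The optimality defining $v_{k_j}^*$ in \eqref{eq:CondG} then yields $v_{k_j}^*\le\nabla f(x^{k_j})^T(q-x^{k_j})\le-\delta$, contradicting $v_{k_j}^*\to 0$. Hence $\bar x$ satisfies \eqref{eq:oc} and is stationary.

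The hard part will be the second step: in the classical compact setting the boundedness of $\|p^k-x^k\|$ is automatic, whereas here it is precisely where assumption {\bf (B)} enters, through Proposition~\ref{pr:wdc}(iii). Without control on $\|p^{k_j}-x^{k_j}\|$, the passage from $|v_{k_j}^*|\lambda_{k_j}\to 0$ to $v_{k_j}^*\to 0$ could break down, since a vanishing product need not force the gap to vanish if the step sizes collapse because the search directions escape to infinity.
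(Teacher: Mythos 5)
Your proof is correct and follows essentially the same route as the paper: the monotone decrease from Lemma~\ref{le:dss} forces $|v_{k_j}^*|\lambda_{k_j}\to 0$, Proposition~\ref{pr:wdc}(iii) (the point where assumption {\bf (B)} enters) bounds $(p^{k_j})$, the explicit step-size formula \eqref{eq:fixed.step} upgrades this to $v_{k_j}^*\to 0$, and continuity of $\nabla f$ transfers stationarity to $\bar x$. The only difference is cosmetic: where the paper extracts a further convergent subsequence $p^{k_\ell}\to\bar p$ and splits into the cases $\bar p=\bar x$ and $\bar p\neq\bar x$, you use a uniform bound $R$ on $\|p^{k_j}-x^{k_j}\|$, which handles both situations at once and is slightly cleaner.
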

\begin{proof}
Let ${\bar x}\in C$ be a limit point of the sequence $(x^k)_{k\in\mathbb{N}}$, and let $(x^{k_j})_{j\in\mathbb{N}}$ be a subsequence of $(x^k)_{k\in\mathbb{N}}$ such that $\lim_{j\to \infty}x^{k_j}={\bar x}$. Hence, $\lim_{j\to \infty}f(x^{k_j})=f({\bar x})$. As Lemma~\ref{le:dss} implies that $(f(x^k))_{k\in\mathbb{N}}$ is a decreasing sequence, we conclude that $(f(x^k))_{k\in\mathbb{N}}$ converges to $f({\bar x})$. Thus, in particular,
\begin{equation} \label{e.limite}
\lim_{ k \to \infty} [f(x^k)-f(x^{k+1})] = 0.
\end{equation}
Moreover, as $(x^{k_j})_{j\in\mathbb{N}}$ is bounded, by combining the inclusion in \eqref{eq:CondG} with (iii) of Proposition~\ref{pr:wdc}, it follows that $(p^{k_j})_{j\in\mathbb{N}}$ is also bounded. Let $(p^{k_\ell})_{\ell \in\mathbb{N}}$ be a subsequence of $(p^{k_j})_{j\in\mathbb{N}}$ such that $\lim_{\ell\to \infty}p^{k_\ell}={\bar p}$. If ${\bar x}={\bar p}$, then by \eqref{eq:CondG} and the continuity of $\nabla f$, we have $\lim_ {\ell \to \infty }v_{k_\ell}^*= \nabla f(x^{k_\ell})^T(p^{k_\ell}-x^{k_\ell})=0$. We now assume that ${\bar x}\neq {\bar p}$. Hence, combining Lemma~\ref{le:dss} with \eqref{e.limite}, we conclude that $\lim_{ \ell \to \infty} |v_{k_\ell}^*| \lambda_{k_\ell}=0$, whereas \eqref{eq:fixed.step} yields
\begin{equation} \label{eq:sew}
 |v_{k_\ell}^*| \lambda_{k_\ell}= \mbox{min}\left\{ |v_{k_\ell}^*|, \frac{|v_{k_\ell}^*|^2}{L \|p^{k_\ell}-x^{k_\ell}\|^2}\right\}.
\end{equation}
As $\lim_{\ell\to \infty}x^{k_\ell}={\bar x}$, $\lim_{\ell\to \infty}p^{k_\ell}={\bar p}$, and ${\bar x}\neq {\bar p}$, we obtain that $ \lim_{ \ell \to \infty} \|p^{k_\ell}-x^{k_\ell}\|=\|{\bar x}- {\bar p}\|\neq 0$. Thus, as $\lim_{ \ell \to \infty} |v_{k_\ell}^*| \lambda_{k_\ell}=0$, it follows from \eqref{eq:sew} that $\lim_{ \ell \to \infty} |v_{k_\ell}^*| =0$. Moreover, the optimality of $v_{k_\ell}^*$ in \eqref{eq:CondG} implies
\begin{equation} \label{e.subseq}
v_{k_\ell}^* \leq \nabla f(x^{k_\ell})^{\T}(p-x^{k_\ell}), \qquad \forall~ p\in C.
\end{equation}
As $\lim_{ \ell \to \infty} v_{k_\ell}^* = 0$, taking the limit in \eqref{e.subseq} and using the continuity of $\nabla f$, we have $\nabla f(\bar{x})^{\T}(p - \bar{x}) \geq 0$ for all $p\in C$. Therefore, $\bar{x}$ is stationary for the problem~\eqref{eq:mp}.
\end{proof}
We now show that if $f$ is assumed convex, we can improve the previous result.
\begin{theorem} \label{th:fconvr}
The following assertions hold:
\begin{description}
	\item[{(i)}] If $f$ is a convex function and $x^*$ is a cluster point of the sequence $(x^k)_{k\in\mathbb{N}}$, then $x^*$ is a solution for the problem~\eqref{eq:mp}.
	\item[{(ii)}] If $f$ is an $M$-strongly convex function, then $(x^k)_{k\in\mathbb{N}}$ converges to a point $x^*\in C$ that is a solution of problem~\eqref{eq:mp}. Moreover, $\|x^k-x^*\|\leq \sqrt{2(f(x^k)-f(x^*))/M}$ for all $k=0, 1,\ldots$.
\end{description}	
\end{theorem}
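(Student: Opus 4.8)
My plan is to treat the two parts separately, with part~(i) serving as the main input for part~(ii); the genuinely delicate step will be extracting convergence in the non-compact setting, where boundedness of the iterates is no longer automatic.

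For part~(i) the argument is immediate. Since $x^*$ is assumed to be a cluster point of $(x^k)_{k\in\mathbb{N}}$, Theorem~\ref{th:gr} guarantees that $x^*$ is stationary for problem~\eqref{eq:mp}, i.e.\ it satisfies \eqref{eq:oc}. Because $f$ is convex, the remark following Proposition~\ref{pr:sccf} (the case $M=0$) tells us that \eqref{eq:oc} is in fact sufficient for optimality, so $x^*$ is a solution of \eqref{eq:mp}.

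For part~(ii) the crucial point---and the one I expect to require the most care, since $C$ need not be compact---is to force the iterates into a bounded set; here I would exploit strong convexity. Fixing $x=x^0$ in Proposition~\ref{pr:sccf} gives, for every $y\in C$,
\begin{equation*}
f(y)\geq f(x^0)+\nabla f(x^0)^{T}(y-x^0)+\frac{M}{2}\|y-x^0\|^2 .
\end{equation*}
As $M>0$, the right-hand side tends to $+\infty$ when $\|y\|\to\infty$, so $f$ is coercive on $C$ and every sublevel set $\{y\in C:\ f(y)\leq c\}$ is bounded. By Lemma~\ref{le:dss} the sequence $(f(x^k))_{k\in\mathbb{N}}$ is decreasing, whence $f(x^k)\leq f(x^0)$ for all $k$, and therefore $(x^k)_{k\in\mathbb{N}}$ lies entirely in one such bounded sublevel set. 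Boundedness yields at least one cluster point, which by part~(i) is a minimizer of $f$ on $C$. Strong convexity makes the minimizer unique: two distinct minimizers $x_1\neq x_2$ would, via Definition~\ref{def:sccf} with $t=1/2$, give $f((x_1+x_2)/2)<f^*$, contradicting the definition of $f^*$. Consequently every cluster point of the bounded sequence coincides with this single point $x^*$, and a bounded sequence with a unique cluster point converges; thus $x^k\to x^*$.

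It then remains to establish the error bound, which is a short computation. Applying Proposition~\ref{pr:sccf} with $x=x^*$ and $y=x^k$ yields
\begin{equation*}
f(x^k)\geq f(x^*)+\nabla f(x^*)^{T}(x^k-x^*)+\frac{M}{2}\|x^k-x^*\|^2 .
\end{equation*}
Since $x^*$ is stationary and $x^k\in C$, condition \eqref{eq:oc} forces $\nabla f(x^*)^{T}(x^k-x^*)\geq 0$, so the middle term can be dropped; rearranging the resulting inequality gives $\|x^k-x^*\|\leq\sqrt{2(f(x^k)-f(x^*))/M}$ for all $k$, which completes the proof.
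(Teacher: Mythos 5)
Your proof is correct, and part (i) is essentially the paper's argument: the paper writes out the two-line convexity computation ($f(p)\geq f(x^*)+\nabla f(x^*)^T(p-x^*)$ plus stationarity from Theorem~\ref{th:gr}) that your citation of the remark after Proposition~\ref{pr:sccf} encapsulates. In part (ii) you share the paper's skeleton---boundedness of the level set $\{x\in C:\ f(x)\leq f(x^0)\}$, monotonicity from Lemma~\ref{le:dss}, a cluster point $x^*$ that is a solution by (i)---but you diverge on how convergence of the whole sequence is obtained. The paper first proves the inequality $f(x^k)-f(x^*)\geq \frac{M}{2}\|x^k-x^*\|^2$ by combining \eqref{eq:oc} with Proposition~\ref{pr:sccf}, and then notes that $(f(x^k))_{k\in\mathbb{N}}$, being decreasing with a subsequence converging to $f(x^*)$, converges to $f(x^*)$; the same inequality then forces $\|x^k-x^*\|\to 0$ and \emph{is} the stated error bound, so one inequality does double duty. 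You instead establish convergence through uniqueness of the minimizer (Definition~\ref{def:sccf} with $t=1/2$) together with the fact that a bounded sequence in $\mathbb{R}^n$ with a unique cluster point converges, and only afterwards derive the error bound by the same computation the paper uses. Your route is slightly longer but equally valid, and it supplies two things the paper leaves implicit: an actual proof (coercivity via the strong-convexity inequality at $x^0$) that the sublevel sets are bounded, which the paper merely asserts, and the explicit observation that the solution of \eqref{eq:mp} is unique under strong convexity.
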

\begin{proof}
To prove(i), we assume that $f$ is convex and $x^*$ is a cluster point of $(x^k)_{k\in\mathbb{N}}$. As $f$ is convex, applying Proposition~\ref{pr:sccf} with $M=0$, we obtain that $f(p)\geq f(x^*)+\nabla f(x^*)^T (p-x^*)$ for all $p\in C$. Therefore, considering that Theorem~\ref{th:gr} implies that $\nabla f(x^*)^{\T}(p - x^*) \geq 0$ for all $p\in C$, we conclude that $f(p) \geq f(x^*)$ for all $p\in C$. Then, $x^*$ is a solution for the problem~\eqref{eq:mp}. To prove (ii), we first note that as $f$ is $M$-strongly convex, the level set ${\cal L}_{f(x^0)}:=\{x\in C: ~f(x)\leq f(x^0)\}$ is bounded. Lemma~\ref{le:dss} now implies that $(x^k)_{k\in\mathbb{N}}\subset {\cal L}_{f(x^0)}$. Hence, $(x^k)_{k\in\mathbb{N}}$ is also bounded. Let $x^*\in C$ be a cluster point of $(x^k)_{k\in\mathbb{N}}$. It follows from (i) that $x^*$ is a solution of problem~\eqref{eq:mp}. Furthermore, combining \eqref{eq:oc} with Proposition~\ref{pr:sccf}, we obtain
 \begin{equation} \label{eq:isc}
 f(x^k)-f(x^*)\geq \frac{M}{2}\|x^*-x^k\|^2, \qquad k=0, 1,\ldots.
\end{equation}
As Lemma~\ref{le:dss} implies that $(f(x^k))_{k\in\mathbb{N}}$ is a decreasing sequence, we have  $\lim_{k \to +\infty}f(x^k)=f(x^*)$. Therefore, \eqref{eq:isc} implies that $(x^k)_{k\in\mathbb{N}}$ converges to $x^* $. Finally, we note that \eqref{eq:isc} is equivalent to the inequality in (ii), and the proof is complete.
\end{proof}
\subsection{Iteration-complexity analysis}\label{sec4}
Herein, we derive two iteration-complexity bounds for the sequence $(x^k)_{k\in\mathbb{N}}$ generated by Algorithm~\ref{Alg:CondG}. To this end, we assume that $\lim_{ k \to \infty} x_{k} = x^*$. Then, (iii) of Proposition~\ref{pr:wdc} implies that $(p^k)_{k\in\mathbb{N}}$ is  bounded. Therefore, we define
\begin{equation} \label{eq;bnn}
0< \sigma:=\sup_{k}\{ \|p^k-x^k\|: ~k=0, 1, \ldots\}<\infty.
\end{equation}
We also define the following constants:
\begin{equation} \label{eq:Const}
\Gamma:=\min \left\{\frac{1}{2\gamma \sigma},\frac{1}{2L\sigma^2}\right\} > 0, \qquad \gamma:=\max \left\{\|\nabla f(x^k)\|: ~k=0, 1, \ldots \right\}>0.
\end{equation}
\begin{theorem} \label{t:convergence CGM}
The following assertions hold:
\begin{description}
	\item[{\bf (i)}] If $f$ is a convex function, then $f(x^k) - f^{*} \leq \Gamma^{-1}/k$ for all $k=1, 2,\ldots$.
	\item[{\bf (ii)}] If $f$ is $M$-strongly convex, then $\|x^k-x^*\|\leq \sqrt{2/(\Gamma M)}/\sqrt{k}$ for all $k=1, 2,\ldots$.
\end{description}
\end{theorem}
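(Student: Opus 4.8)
The plan is to prove (i) by casting it as an instance of the scalar recursion in Lemma~\ref{le:rate}, applied to the nonnegative sequence $a_k := f(x^k)-f^*$, and then to deduce (ii) almost immediately from (i) together with Theorem~\ref{th:fconvr}. Since $\lim_{k\to\infty}x^k=x^*$ is assumed throughout this subsection, part~(i) of Theorem~\ref{th:fconvr} guarantees that $x^*$ solves~\eqref{eq:mp}, so $f(x^*)=f^*$ and $a_k\geq 0$. The goal is therefore to verify the hypothesis $\Gamma a_k^2\leq a_k-a_{k+1}$ of Lemma~\ref{le:rate}; once this holds, the lemma yields $a_k<1/(\Gamma k)=\Gamma^{-1}/k$, which is exactly the bound claimed in~(i).

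To establish this recursion I would combine three facts. First, the sufficient-decrease estimate of Lemma~\ref{le:dss} gives $a_k-a_{k+1}=f(x^k)-f(x^{k+1})\geq \tfrac12|v_k^*|\lambda_k$, and the definition of the step size in~\eqref{eq:fixed.step} rewrites this product as $|v_k^*|\lambda_k=\min\{|v_k^*|,\,|v_k^*|^2/(L\|p^k-x^k\|^2)\}$. Second, convexity of $f$ (the gradient inequality from Proposition~\ref{pr:sccf} with $M=0$) applied at $x^k$ and $x^*$ gives $v_k^*=\min_{p\in C}\nabla f(x^k)^T(p-x^k)\leq \nabla f(x^k)^T(x^*-x^k)\leq f^*-f(x^k)$, hence $|v_k^*|\geq a_k$. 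Third, the Cauchy--Schwarz inequality together with the definitions~\eqref{eq;bnn} and~\eqref{eq:Const} yields $\|p^k-x^k\|\leq \sigma$ and $|v_k^*|\leq \gamma\sigma$.

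With these in hand I would split into the two branches of the minimum. When $\lambda_k=|v_k^*|/(L\|p^k-x^k\|^2)$, the decrease is at least $\tfrac12|v_k^*|^2/(L\|p^k-x^k\|^2)\geq a_k^2/(2L\sigma^2)\geq \Gamma a_k^2$, using $|v_k^*|\geq a_k$, $\|p^k-x^k\|\leq\sigma$, and $\Gamma\leq 1/(2L\sigma^2)$. When $\lambda_k=1$, the decrease is at least $\tfrac12|v_k^*|\geq \tfrac12 a_k$; since $a_k\leq\gamma\sigma$ and $\Gamma\leq 1/(2\gamma\sigma)$ we have $\Gamma a_k\leq \tfrac12$, whence $\Gamma a_k^2\leq \tfrac12 a_k\leq a_k-a_{k+1}$. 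Thus both branches deliver $\Gamma a_k^2\leq a_k-a_{k+1}$, and Lemma~\ref{le:rate} closes~(i). For~(ii), $M$-strong convexity implies convexity, so~(i) applies and gives $f(x^k)-f^*\leq \Gamma^{-1}/k$; combining this with the estimate $\|x^k-x^*\|\leq \sqrt{2(f(x^k)-f(x^*))/M}$ of Theorem~\ref{th:fconvr}(ii) and $f(x^*)=f^*$ yields $\|x^k-x^*\|\leq \sqrt{2/(M\Gamma k)}=\sqrt{2/(\Gamma M)}/\sqrt{k}$.

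The main obstacle is the two-case estimate establishing the recursion: the constants $\Gamma$, $\sigma$, and $\gamma$ in~\eqref{eq;bnn} and~\eqref{eq:Const} must be chosen precisely so that \emph{both} branches of the step-size minimum produce the \emph{same} quadratic decrease $\Gamma a_k^2$. The underlying difficulty, and the point where the non-compact setting genuinely enters, is that the finiteness and positivity of $\sigma$ and $\gamma$ (hence $\Gamma>0$) are not automatic as in the classical compact case; they rest on the standing assumption $\lim_{k\to\infty}x^k=x^*$ and on Proposition~\ref{pr:wdc}(iii), which is what makes the sequence $(p^k)_{k\in\mathbb{N}}$ bounded here.
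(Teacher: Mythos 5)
Your proof is correct and follows essentially the same route as the paper: both establish the recursion $\Gamma a_k^2 \le a_k - a_{k+1}$ for $a_k := f(x^k)-f^*$ by combining the sufficient-decrease estimate of Lemma~\ref{le:dss}, the bounds $a_k \le |v_k^*| \le \gamma\sigma$ (from convexity and Cauchy--Schwarz) and $\|p^k-x^k\|\le\sigma$, and then invoke Lemma~\ref{le:rate}, with part (ii) deduced from (i) and the strong-convexity estimate exactly as in the paper. The only difference is organizational --- the paper first proves $\Gamma (v_k^*)^2 \le f(x^k)-f(x^{k+1})$ and then uses $a_k^2 \le (v_k^*)^2$, whereas you substitute $a_k$ into the two branches of the step-size minimum directly --- and, incidentally, you cite Theorem~\ref{th:fconvr}(i) for the optimality of $x^*$, which is the correct reference (the paper's citation of part (ii) there is a slight slip, since only convexity is available in part (i)).
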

\begin{proof}
To prove (i), we first prove the following inequality:
\begin{equation} \label{eq:lbci}
	 \Gamma {v_{k}^*}^2\leq f(x^k)-f(x^{k+1}), \qquad \forall~ k=0, 1,\dots,
	\end{equation}	
where $\Gamma$ is defined in \eqref{eq:Const}. By using Lemma~\ref{le:dss}, with $\sigma$ defined in\eqref{eq;bnn}, and considering that ${v_{k}^*} < 0$,  we conclude after some algebraic manipulations that
\begin{equation} \label{e.sufdec}
  \min\left\{ \frac{1}{2|v_{k}^*|} , \frac{1}{2 L \sigma^2}\right\} {v_{k}^*}^2 \leq f(x^{k}) -f(x^{k+1}), \qquad \forall~ k=0, 1,...
\end{equation}
Further, by combining \eqref{eq:CondG} with  \eqref{eq;bnn} and the second equality in \eqref{eq:Const}, we obtain $0< |v_{k}^*|\leq \| \nabla f(x^k)\| \|x^k-p^k\|\leq \gamma \sigma$ for all $k=0, 1, \ldots$, which implies
$$
\frac{1}{ \gamma \sigma}\leq \frac{1}{ |v_{k}^*|}, \qquad k=0, 1, \ldots.
$$
Thus, \eqref{eq:lbci} follows from \eqref{e.sufdec}, the previous inequality, and \eqref{eq:Const}. It now follows from (ii) of Theorem~\ref{th:fconvr} that $x^*$ is a solution of problem~\eqref{eq:mp}. As $f$ is convex, we have $f^*=f(x^*) \geq f(x^k)+ \nabla f(x^k)^{T}(x^{*} - x^k)$ for all $k$. Thus, \eqref{eq:CondG} implies that $f^* - f(x^k) \geq \nabla f(x^k)^{T}(x^{*} - x^k) \geq f(x^k)^{T}(p^{k} - x^k) = v_{k}^*$ for all $k$. As $ f^* \leq f(x^k)$ for all $k$, we conclude that $ v_{k}^* \leq f^* - f(x^k) \leq 0$ for all $k$. Therefore, we obtain
$ (f(x^k)-f^* )^{2} \leq v_{k}^{*2} $ for all $k=0, 1, \ldots$; this, combined with \eqref{eq:lbci}, yields
 \begin{equation} \label{e.72}
 (f(x^k)-f^* )-(f(x^{k+1})-f^*) \geq \Gamma (f(x^k)-f^* )^{2}, \qquad k=0, 1, \ldots.
 \end{equation}
As $\Gamma > 0$, if we define $a_k :=f(x^k)-f^*$, we conclude from \eqref{e.72} that $a_k-a_{k+1} \geq \Gamma a_k^{2}$. Thus, (i) follows by applying Lemma~\ref{le:rate}. To prove (ii), we first note that the $M$-strong convexity of $f$ and \eqref{eq:oc} imply that $\|x^k-x^*\|\leq \sqrt{2(f(x^k)-f(x^*))/M}$ for all $k=0, 1,\ldots$. Therefore, (ii) follows by using the inequality in (i), and the proof is complete.
 \end{proof}

\section{Examples}\label{sec-Ap}
Herein, we present examples of problem~\eqref{eq:mp}, with $f$ satisfying {\bf (A)} and {\bf (B)}, and $C$ unbounded. We first present a general class of functions $f$ satisfying {\bf (A)} and {\bf (B)}. To this end, let ${\cal{K}}\subset \R^n$ be a closed convex cone such that $\mbox{int} {\cal{K}} \cap \mbox{int} {\cal{K}}^* \neq \varnothing$,
$G : \R^n \to \R^n$ be a differentiable function, and $G'$ be its Jacobian. We assume that for constants $L_1>0$ and $L_2>0$, the function $G$ satisfies the following conditions:
\begin{description}
\item[{\bf (C1)}]$\| G(x)- G(y) \| \leq L_1\|x-y \|$ for all $x,y \in {\cal{K}}$;
\item[{\bf (C2)}]$\| G'(x)x- G'(y)y \| \leq L_2\|x-y \|$ for all $x,y \in {\cal{K}}$;
\item[{\bf (C3)}]$G(x)+G'(x)x\in {\cal{K}}^*$ for all $x \in {\cal{K}}$.
\end{description}
Let $a\in \mbox{int} {\cal{K}}^*$, and a function $f : \R^n \to \R$ be defined by
\begin{equation} \label{eq:hth}
f(x) :=a^Tx + G(x)^Tx.
\end{equation}
\begin{lemma} \label{le:ge}
Let $C\subset {\cal{K}}$ be closed and convex. Then, the gradient $\nabla f$ of $f$ defined in \eqref{eq:hth} satisfies conditions {\bf (A)} and {\bf (B)} on $C$.
\end{lemma}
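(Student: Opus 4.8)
The plan is to compute $\nabla f$ explicitly and then verify \textbf{(A)} and \textbf{(B)} separately, each by feeding the matching hypothesis on $G$ into the relevant definition. Differentiating \eqref{eq:hth}, the linear term $a^Tx$ contributes the constant $a$, while the product rule applied to $G(x)^Tx=\langle G(x),x\rangle$ contributes $G(x)+G'(x)x$; hence
\begin{equation*}
\nabla f(x) = a + G(x) + G'(x)x, \qquad x\in {\cal K}.
\end{equation*}
Thus the two non-constant pieces of the gradient are exactly the quantities controlled by \textbf{(C1)}--\textbf{(C3)}, and the whole argument reduces to routing these through the definitions. Since $C\subseteq {\cal K}$, every hypothesis on $G$ valid on ${\cal K}$ holds a fortiori for $x,y\in C$.

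For \textbf{(A)} I would fix $x,y\in C\subseteq {\cal K}$, subtract the two gradients so that the constant $a$ cancels, and apply the triangle inequality:
\begin{equation*}
\|\nabla f(x)-\nabla f(y)\| \le \|G(x)-G(y)\| + \|G'(x)x-G'(y)y\| \le (L_1+L_2)\|x-y\|,
\end{equation*}
where the first summand is bounded by \textbf{(C1)} and the second by \textbf{(C2)}. This gives \textbf{(A)} with $L:=L_1+L_2$, and continuity of $\nabla f$ (so that $f$ is $C^1$) is then automatic.

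The substance of the lemma lies in \textbf{(B)}, and the step I expect to be the conceptual crux is transferring the cone structure of ${\cal K}$ to the asymptotic cone of the smaller set, namely showing $C_\infty\subseteq {\cal K}$. I would prove this from the recession-direction characterization of $C_\infty$ recalled in Section~\ref{sec:Preliminares}: choosing any $x_0\in C$, for $d\in C_\infty$ one has $x_0+td\in C\subseteq {\cal K}$ for all $t\ge 0$, so $\tfrac1t(x_0+td)=\tfrac1t x_0+d\in {\cal K}$ because ${\cal K}$ is a cone, and letting $t\to\infty$ with ${\cal K}$ closed yields $d\in {\cal K}$. Everything else is bookkeeping, but this inclusion is what makes the positivity of $a$ on ${\cal K}$ usable against directions in $C_\infty$.

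With $C_\infty\subseteq {\cal K}$ in hand, I would verify the defining inequality of $\operatorname{int}(C_\infty)^*$ directly rather than argue about topological interiors. Fix $x\in C$ and take any $d\in C_\infty\setminus\{0\}\subseteq {\cal K}\setminus\{0\}$; then
\begin{equation*}
\nabla f(x)^T d = a^T d + \bigl(G(x)+G'(x)x\bigr)^T d,
\end{equation*}
where $a^T d>0$ since $a\in\operatorname{int}{\cal K}^*$ and $d\in {\cal K}\setminus\{0\}$, while $\bigl(G(x)+G'(x)x\bigr)^T d\ge 0$ since \textbf{(C3)} places $G(x)+G'(x)x$ in ${\cal K}^*$ and $d\in {\cal K}$. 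Hence $\nabla f(x)^T d>0$ for every $d\in C_\infty\setminus\{0\}$, which is precisely $\nabla f(x)\in\operatorname{int}(C_\infty)^*$, establishing \textbf{(B)}. I note that the standing requirement $\operatorname{int}{\cal K}\cap\operatorname{int}{\cal K}^*\neq\varnothing$ is not invoked in the argument itself; it serves only to guarantee that some $a\in\operatorname{int}{\cal K}^*$ exists, so that the class \eqref{eq:hth} is nonempty.
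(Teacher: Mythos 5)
Your proof is correct and follows essentially the same route as the paper: compute $\nabla f(x)=a+G(x)+G'(x)x$, get \textbf{(A)} from \textbf{(C1)}--\textbf{(C2)} via the triangle inequality, and get \textbf{(B)} from $C_\infty\subseteq{\cal K}$ together with $a\in\operatorname{int}{\cal K}^*$ and \textbf{(C3)}. The only difference is one of detail: you prove the inclusion $C_\infty\subseteq{\cal K}$ explicitly and check the defining inequality of $\operatorname{int}(C_\infty)^*$ pointwise, whereas the paper asserts the inclusion and passes through the chain ${\cal K}^*\subset C_\infty^*$, $\operatorname{int}{\cal K}^*\subset\operatorname{int}(C_\infty)^*$ --- the same argument in slightly different packaging.
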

\begin{proof}
We first note that the gradient $\nabla f$ of the function $f$ defined in \eqref{eq:hth} is given by
\begin{equation} \label{eq:dhth}
\nabla f(x)=a+ G(x)+G'(x)x.
\end{equation}
Thus, using {\bf (C1)} and {\bf (C2)}, we conclude that $\| \nabla f(x)- \nabla f(y) \| \leq (L_1+L_2)\|x-y \|$ for all $x,y \in C$. Hence, $\nabla f$ satisfies {\bf (A)}. Finally, as $a\in \mbox{int} {\cal{K}}^*$, {\bf (C3)} implies that $\nabla f(x)\in \mbox{int} {\cal{K}}^*$ for all $x\in C$. Furthermore, as $C\subset {\cal{K}}$, we obtain that $C_\infty \subset {\cal{K}}$. Hence, $ {\cal{K}}^*\subset C_\infty^*$, which implies that  $ \mbox{int} {\cal{K}}^* \subset \mbox{int}(C_\infty^*)$. Thus, we conclude that $\nabla f(x)\in \mbox{int}(C_{\infty})^*$ for all $x\in C$. Therefore, $\nabla f$ satisfies {\bf (B)} on $C$.
\end{proof}
We now recall a well-known result about Lipschitz functions, which is an immediate consequence of the mean-value inequality.
\begin{theorem} \label{th:csl}
Let $C\subset {\mathbb R}^n$ be a convex set, $F:{\mathbb R}^n\to {\mathbb R}^m$ be a continuously differentiable function, and $F'$ be its Jacobian. We assume that there exists a constant $L\geq 0$ such that $\|F'(x)\|\leq L$ for all $x\in C$. Then, $F$ is Lipschitz continuous with constant $L$ on $C$, that is, $\| F(x)- F(y) \| \leq L\|x-y \|$ for all $x,y \in C$.
\end{theorem}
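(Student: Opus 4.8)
The plan is to reduce this multivariate statement to a one-dimensional computation along the segment joining the two points, exploiting the convexity of $C$, and then to invoke the fundamental theorem of calculus together with the triangle inequality for vector-valued integrals. This is exactly the ``mean-value inequality'' route alluded to in the statement.

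First I would fix arbitrary $x,y \in C$ and record that, since $C$ is convex, the entire segment $z(t) := x + t(y-x)$, $t\in[0,1]$, lies in $C$; consequently the hypothesis $\|F'(z(t))\| \le L$ is available at \emph{every} point of the segment, not merely at the endpoints. I would then introduce the auxiliary curve $\phi:[0,1]\to \R^m$ defined by $\phi(t) := F(z(t))$. Because $F$ is continuously differentiable and $t\mapsto z(t)$ is affine, the chain rule gives that $\phi$ is continuously differentiable with $\phi'(t) = F'(z(t))(y-x)$.

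Next I would apply the fundamental theorem of calculus (componentwise) to write $F(y)-F(x) = \phi(1)-\phi(0) = \int_0^1 F'(z(t))(y-x)\,dt$. Taking Euclidean norms and using the triangle inequality for integrals, $\bigl\|\int_0^1 g(t)\,dt\bigr\| \le \int_0^1 \|g(t)\|\,dt$, I would bound $\|F(y)-F(x)\|$ by $\int_0^1 \|F'(z(t))(y-x)\|\,dt$. The integrand is controlled by submultiplicativity of the matrix norm, namely $\|F'(z(t))(y-x)\| \le \|F'(z(t))\|\,\|y-x\| \le L\,\|y-x\|$, where the last inequality is legitimate precisely because $z(t)\in C$. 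Integrating this constant bound over $[0,1]$ yields $\|F(y)-F(x)\| \le L\,\|y-x\|$, which is the asserted conclusion.

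I do not anticipate a genuine obstacle, as the argument is standard; the only points requiring care are bookkeeping about norms. First, the bound $\|F'(z)(y-x)\| \le \|F'(z)\|\,\|y-x\|$ forces us to read $\|F'(z)\|$ as the operator norm induced by the Euclidean norms on $\R^n$ and $\R^m$ (equivalently, the largest singular value of the Jacobian), and I would state this convention explicitly. Second, the vector-valued integral inequality $\bigl\|\int_0^1 g\bigr\|\le \int_0^1\|g\|$ can be justified either componentwise together with Cauchy--Schwarz, or more directly from the dual characterization $\|v\| = \sup_{\|u\|=1} u^{\T} v$ applied under the integral sign.
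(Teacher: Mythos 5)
Your proof is correct and follows exactly the route the paper indicates: the paper states this theorem without proof, remarking only that it is ``an immediate consequence of the mean-value inequality,'' and your argument---parametrizing the segment (valid by convexity of $C$), applying the fundamental theorem of calculus to $t\mapsto F(x+t(y-x))$, and bounding the integrand via the operator norm---is precisely that mean-value inequality spelled out in detail. Your added care about reading $\|F'(\cdot)\|$ as the induced operator norm and justifying the vector-valued integral bound is sound and fills in the details the paper leaves implicit.
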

Moreover, the following characterization of convex functions is required; its proof can be found in \cite[Theorem 4.1.1, pp. 190]{UrrutyLemarechal1993_305}.
 \begin{theorem}\label{th:csc}
Let $C\subset {\mathbb R}^n$ be a convex set, $f:{\mathbb R}^n\to {\mathbb R}$ be a twice continuously differentiable function, $\nabla^2 f$ be its Hessian, and $M\geq 0$ be a constant. Then, $f$ is $M$-strongly convex on $C$ if and only if $v^T\nabla^2 f(x)v\geq M\|v\|^2$ for all $x\in C$ and $v\in {\mathbb R}^n$.
\end{theorem}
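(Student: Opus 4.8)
The plan is to use Proposition~\ref{pr:sccf} as the bridge between the definition of $M$-strong convexity and a second-order condition, so that the whole equivalence reduces to relating the first-order inequality $f(y)\geq f(x)+\nabla f(x)^T(y-x)+\tfrac{M}{2}\|x-y\|^2$ to the quadratic-form inequality on $\nabla^2 f$. Since $f$ is twice continuously differentiable and $C$ is convex (so the entire segment $[x,y]$ lies in $C$), the natural tool is the second-order Taylor expansion with integral remainder,
\[f(y)=f(x)+\nabla f(x)^T(y-x)+\int_0^1(1-s)\,(y-x)^T\nabla^2 f\big(x+s(y-x)\big)(y-x)\,ds.\]

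For the backward implication (Hessian condition $\Rightarrow$ strong convexity) I would fix $x,y\in C$, note that $x+s(y-x)\in C$ for every $s\in[0,1]$, and apply the hypothesis with $v=y-x$ inside the integrand to get $(y-x)^T\nabla^2 f(x+s(y-x))(y-x)\geq M\|y-x\|^2$. Integrating the nonnegative weight $(1-s)$ against this bound yields exactly $f(y)\geq f(x)+\nabla f(x)^T(y-x)+\tfrac{M}{2}\|x-y\|^2$, and Proposition~\ref{pr:sccf} then certifies $M$-strong convexity. This direction is routine.

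For the forward implication (strong convexity $\Rightarrow$ Hessian condition) I would start from the first-order inequality supplied by Proposition~\ref{pr:sccf}, fix $x$ and a direction $v$, put $y=x+tv$ for small $t>0$, and compare with the expansion $f(x+tv)=f(x)+t\nabla f(x)^Tv+\tfrac{t^2}{2}v^T\nabla^2 f(x)v+o(t^2)$. Subtracting the two and dividing by $t^2/2$ gives $v^T\nabla^2 f(x)v+o(1)\geq M\|v\|^2$, and letting $t\to 0^+$ produces the claim.

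The main obstacle lies precisely in this forward step, and it is geometric rather than computational: the substitution $y=x+tv$ is legitimate only when $x+tv\in C$ for all small $t$, which holds for arbitrary $v$ only at interior points of $C$ (more generally only for $v$ lying in the affine hull of $C$). One must therefore assume $C$ has nonempty interior, establish the inequality first on $\operatorname{int} C$ and then extend it to all of $C$ by continuity of $\nabla^2 f$; without such a hypothesis the statement \emph{for all $v\in\mathbb{R}^n$} cannot hold verbatim, since a function can be convex along a lower-dimensional $C$ while having an indefinite Hessian transverse to it. A slightly cleaner reorganization, which I would use to avoid repeating the argument, is to set $g:=f-\tfrac{M}{2}\|\cdot\|^2$, whose Hessian is $\nabla^2 f-M\mathrm{I}_n$; via the identity $\|tx+(1-t)y\|^2=t\|x\|^2+(1-t)\|y\|^2-t(1-t)\|x-y\|^2$ the map $f$ is $M$-strongly convex iff $g$ is convex, reducing everything to the single case $M=0$.
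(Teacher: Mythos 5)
Your proof is essentially correct, but there is nothing in the paper to compare it against: the paper does not prove Theorem~\ref{th:csc} at all, it simply cites \cite[Theorem 4.1.1, pp.~190]{UrrutyLemarechal1993_305}. What you wrote is the standard textbook argument --- the integral-remainder Taylor formula combined with Proposition~\ref{pr:sccf} for the ``if'' direction, and the second-order expansion along $y=x+tv$ for the ``only if'' direction --- and the reduction to the case $M=0$ via $g=f-\tfrac{M}{2}\|\cdot\|^2$ is also a valid way to organize it.

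The obstacle you flag in the forward implication is not a defect of your proof but of the statement itself, and you are right about it. As written (arbitrary convex $C$, all $v\in\R^n$), the ``only if'' direction is false when $C$ is lower-dimensional: take $n=2$, $C=[0,1]\times\{0\}$, and $f(x_1,x_2)=x_1^2-x_2^2$. By the norm identity you quoted, $f$ satisfies Definition~\ref{def:sccf} with $M=2$ on $C$ (in fact with equality), yet $v^T\nabla^2 f(x)v=-2<2\|v\|^2$ for $v=(0,1)$. Textbook versions of this result, including the source the paper cites, assume the domain is open (or restrict $v$ to the subspace parallel to $\aff C$, rather than ``the affine hull'' itself as you wrote --- a minor imprecision). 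Your repair is exactly right: assume $\inte C\neq\varnothing$, run the argument $y=x+tv$ at interior points, and pass to boundary points of $C$ by continuity of $\nabla^2 f$, using that a convex set with nonempty interior satisfies $C\subset\overline{\inte C}$. For what it is worth, the flaw is harmless for this paper: every invocation of Theorem~\ref{th:csc} in Section~\ref{sec-Ap} uses only the ``if'' direction (Hessian bound $\Rightarrow$ strong convexity), which your integral-remainder argument establishes for an arbitrary convex $C$, and the constraint sets appearing there are full-dimensional anyway.
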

In the following, we present specific examples of functions $G$ satisfying {\bf (C1)}--{\bf (C3)} on the cone ${\cal{K}}= \R^n_{+}$ such $f$ in \eqref{eq:hth} is convex and $\nabla f$ satisfies {\bf (A)} and {\bf (B)} for any closed convex set $C\subset \R^n_{+}$.
 \begin{example}
 Let $Q=(q_{i j} )\in {\mathbb R}^{n \times n}$ with entries $q_{i j} \geq 0$ for all $i, j$ and $a\in \R^n_{++}$. Let $G: \R^n \to \R^n$ be a linear function defined by $G(x)= Qx$. Direct calculations show that $\| G(x)- G(y) \|=\| G'(x)x- G'(y)y \| \leq \|Q\|\|x-y \|$ for all $x,y \in\R^n$. Thus, $G$ satisfies {\bf (C1)} and {\bf (C2)}, with $L_1=L_2=\|Q\|$, for any cone ${\cal{K}}$. As $q_{i j} \geq 0$ and $a\in \R^n_{++}$, $G$ also satisfies {\bf (C3)} in ${\cal{K}}= \R^n_{+}$. Moreover, if $Q$ satisfies the condition $v^TQv\geq M\|v\|^2$ for all $v\in {\mathbb R}^n$ and some $M\geq 0$, then Theorem~\ref{th:csc} implies that the problem~\eqref{eq:mp} with the associated quadratic function $f(x):=a^Tx + x^TQx$ is $M$-strong convex. Moreover, Lemma~\ref{le:ge} implies that $\nabla f(x)=a+Qx$ satisfies {\bf (A)} and {\bf (B)} for any closed and convex set $C\subset \R^n_{+}$. For instance, $C:=\left\{x\in\mathbb{R}^p:~x_1\geq x_2\geq\cdots\geq x_p\geq 0\right\}$, the monotone nonnegative cone.
\end{example}
 \begin{example}
Let $e:=(1, \ldots, n)\in \R^n$ be a vector, and $\alpha >0$ and $\beta >0$ be constants satisfying $2\alpha > 3\beta^{3/2}\sqrt{n}$. Then, the function $G: \R^n \to \R^n$ defined by
\begin{equation} \label{eq:fga}
G(x):= \alpha x + \frac{\beta}{ \sqrt{1+\beta x^T x}}e,
\end{equation}
satisfies {\bf (C1)}--{\bf (C3)} in ${\cal{K}}= \R^n_{+}$. Moreover, the problem~\eqref{eq:mp} with the associated function
\begin{equation} \label{eq:af}
f(x):=a^Tx + \alpha x^Tx + \frac{\beta}{ \sqrt{1+\beta x^T x}}e^Tx ,
\end{equation}
with $a\in \R^n_{++}$ is $M$-strong convex. Moreover, $\nabla f$ satisfies {\bf (A)} and {\bf (B)}. Indeed, we first note that $f(x) :=a^Tx + G(x)^Tx$. Some calculations show that
\begin{equation} \label{eq:dfda}
G'(x)= \alpha{\rm I_n}+ \frac{-\beta^2}{(1+\beta x^T x)^{3/2}}ex^T, \qquad G(x)+ G'(x)x= 2\alpha x+ \frac{\beta}{(1+\beta x^T x)^{3/2}}e,
\end{equation}
where ${\rm I_n}\in {\mathbb R}^{n \times n}$ is the identity matrix. Thus, $\nabla f (x)= a+ G(x)+ G'(x)x$. Hence, after some calculations, we have
\begin{equation} \label{eq:dhda}
\nabla^2 f(x)= 2\alpha{\rm I_n}+ \frac{-3\beta^2}{(1+\beta x^T x)^{5/2}}ex^T.
\end{equation}
The first equality in \eqref{eq:dfda} yields $\|G'(x)\|\leq \beta^{3/2} \sqrt{n}+ \alpha$ for all $x\in C$, and \eqref{eq:dhda} implies
\begin{equation} \label{eq:dhpd}
 0<\left(2\alpha -3\beta^{3/2}\sqrt{n}\right)|v\|^2\leq v^T \nabla^2 f(x)v\leq \left(2\alpha+ 3\beta^{3/2}\sqrt{n}\right)\|v\|^2,
\end{equation}
for all $x\in C$ and all $v\in {\mathbb R}^n$. As $\|G'(x)\|\leq \beta^{3/2}\sqrt{n}+ \alpha$, it follows from Theorem~\ref{th:csl} that $G$ also satisfies {\bf (C1)} with $L_1=\beta^{3/2}\sqrt{n}+ \alpha$. In particular, \eqref{eq:dhpd} implies that $\|\nabla^2f(x)\|\leq 2\alpha+ 3\beta^{3/2}\sqrt{n}$, for all $x\in C$. Hence, as $\|G'(x)\|\leq \beta^{3/2} \sqrt{n}+ \alpha$ for all $x\in C$, Theorem~\ref{th:csl} also implies that
$$
\| G'(x)x- G'(y)y \| \leq \|G(x)-G(y)\| + \|\nabla f(x)-\nabla f(y)\|\leq \left(4 \beta^{3/2}\sqrt{n}+ 3\alpha\right)\|x-y\|,
$$
for all $x, y\in C$. Thus, $G$ also satisfies {\bf (C2)} with $L_2=4 \beta^{3/2}\sqrt{n}+ 3\alpha$.
The second inequality in \eqref{eq:dfda} implies that $G(x)+G'(x)x\in {\R^n_{++}}$ for all $x \in \R^n_{+}$. As $(\R^n_+)^*=\R^n_+$, $G$ satisfies {\bf (C3)}. Therefore, Lemma~\ref{le:ge} implies that $\nabla f$ of $f$ in \eqref{eq:af} satisfies conditions {\bf (A)} and {\bf (B)} for any closed and convex set $C\subset \R^n_{+}$. For instance, $C:=\left\{x\in\mathbb{R}^p:~x_1\geq x_2\geq\cdots\geq x_p\geq 0\right\}$, the monotone nonnegative cone. Finally, using \eqref{eq:dhpd}, it follows from Theorem~\ref{th:csc} that $f$ in \eqref{eq:af} is $M$-strong convex with $M= 2\alpha -3\beta^{3/2}\sqrt{n}$.
\end{example}
In the next example, we present directly a convex function $f$ satisfying {\bf (A)} and {\bf (B)} in a closed convex set $C\subset \R^n_{+}$
\begin{example}
Let $\beta >0$, $a\in \R^n_{++}$, and $f: \R^n \to \R$ be defined by
\begin{equation} \label{eq:fsq}
f(x):=a^T x+ \sqrt{1+\beta x^T x}. 
\end{equation}
Let $C:=\left\{x\in\mathbb{R}^p:~x_1\geq x_2\geq\cdots\geq x_p\geq 0\right\}$ be the monotone nonnegative cone. 
 We note that, in this case, the gradient and the Hessian of $f$ are given by
$$
\nabla f(x)=a+\frac{\beta}{ \sqrt{1+\beta x^T x}}x, \qquad \nabla^2 f(x)= \frac{\beta}{ \sqrt{1+\beta x^T x}}{\rm I_n} - \frac{\beta^2}{(1+\beta x^T x)^{3/2}}xx^T,
$$
respectively, where ${\rm I_n}\in {\mathbb R}^{n \times n}$ is the identity matrix. Some calculations show that
$$
 \frac{\beta}{(1+\beta x^T x)^{3/2}}v^Tv \leq v^T\nabla^2 f(x)v\leq \beta v^Tv, \qquad \forall~v\in \R^n,
$$
which implies that $\nabla^2 f(x)$ is positive definite and $\|\nabla^2 f(x)\|\leq \beta$. Thus, using Theorems~\ref{th:csc} and \ref{th:csl}, we conclude that $f$ is convex, and $\nabla f$ is Lipschitz continuous with constant $\beta$. Moreover, for any closed and convex set $C \subset \R^n_{+}$, we have $C_\infty \subset \R^n_+$. Hence, $\R^n_+ =(\R^n_+)^*\subset C_\infty^*$, which implies that $\R^n_{++} \subset int(C_\infty^*)$. As $\nabla f(x)\in \R^n_{+ +}$ for all $x\in C$, we conclude that $\nabla f(x) \in int(C_\infty^*)$ for all $x\in C$. Finally, the convexity of $g$ implies that $C$ is convex. Therefore, the problem~\eqref{eq:mp} with the objective function \eqref{eq:fsq} is convex, and $f$ satisfies conditions {\bf (B)} and {\bf (A)}.
\end{example}
Let us present two more examples of convex functions and the respective convex sets satisfying conditions {\bf (B)} and {\bf (A)}.
\begin{description}
\item[(i)] Let $f: \R^n \to \R$ be defined by $f(x)=\ln(e^{x_1}+e^{x_2}+\ldots+ e^{x_n)}$. This function satisfies {\bf (A)} \cite[Example 5.15, pp. 115]{Beck2017}. Some calculations show that it also satisfies {\bf (B)} for any convex set $C\subset \R^n_{+}$.
\item[(ii)] Let $C\subset \R^n_{++}$ be closed and convex, and $d_{C}(x):=\min_{y\in C}\|x-y\|$ for $x\in \R^n$. We define the convex function $\psi_C: \R^n \to \R$ by $\psi_C(x):=\frac{1}{2}\|x\|^2-\frac{1}{2} d^2_{C}(x)$ \cite[Example 2.17.4, pp. 22]{Beck2017}. We can prove that $\nabla \psi_C(x)=P_C(x)$, where $P_C$ denotes the orthogonal projection onto $C$ \cite[Example 3.49, pp. 61]{Beck2017}. As $\R^n_{++} \subset int(C_\infty^*)$ and $C\subset \R^n_{++}$, we conclude that $\nabla \psi_C(x) \in C_\infty^*$, which implies that $\psi_C $ satisfies {\bf (B)}. Moreover, the nonexpansivity of the projection implies that $\psi_C $ also satisfies {\bf (A)}.
\end{description}
We conclude this section by presenting examples of unbounded convex sets that appear as constraints in optimization problems.
\begin{description}
\item [(a)] $C:=\{ x\in \R^n_{+}:~ 1\leq x_1\ldots x_n\}$;
\item [(b)] $C:=\{ x\in \R^n_{+}:~ 1\leq x_1+\ldots+ x_n\}$;
\item [(c)] $C:=\{ x\in \R^n_{+}:~ b\leq Ax\}$, where $A=(a_{i j} )\in {\mathbb R}^{n \times n}$ with $a_{i j} > 0$ and $b\in \R^n_{++}$.
\end{description}
Let $\Omega \subset \R^n$ be a closed, convex set, and $g: \Omega \to \R$ be a convex function. The epigraph of $g$ is defined by
$
\mbox{epi}(g):=\{ (x, t)\in \Omega \times \R:~g(x)\leq t\}.
$
The set $C=\mbox{epi}(g)\subset {\mathbb R}^{n}\times {\mathbb R}$ is convex and unbounded. We now provide specific examples.
\begin{description}
\item[(1)] (Lorentz cone) $\left\{ (x, t)\in \R^n \times \R:~\|x\|_2\leq t\right\}$, where $\|\cdot \|_2$ denotes the $2$-norm;
\item[(2)] $\left\{ (x, t)\in \R^n \times \R:~\|x\|_1\leq t\right\}$, where $\|\cdot \|_1$ denotes the $1$-norm;
\item[(3)] $\left\{ (x_1, \ldots, x_{n}, t) \in \ \R^{n}_{++} \times \R:~1/(x_1\ldots x_{n})\leq t\right\}$.
\end{description}
We point out that projecting on the sets $(1)$, $(2)$, and $(3)$ is not a particularly expensive task \cite[Chapter~6]{Beck2017}. Finally, we present an example on the cone of positive semidefinite matrices. 
\begin{example}
The cone of positive semidefinite (resp., definite) $n \times n$
symmetric matrices is denoted by ${\mathbb S}^{n}_{+}$ (resp., ${\mathbb S}^{n}_{++}$) and is selfdual, that is, ${{\mathbb S}^{n}_{+}}^*={\mathbb S}^{n}_{+}$. The trace of $X=(X_{ij}) \in {\mathbb S}^{n}$ is denoted by $tr X := \sum_{i=1}^p X_{ii}$. Given $X$ and $Y$ in ${\mathbb S}^{n}$, their inner product is defined as $\langle X, Y\rangle := tr X Y = \sum_{i=1,j=1}^{n,m} X_{ij}Y_{ij}$, whereas the norm of $X $ is defined by $\|X\| := \langle X, X\rangle^{1/2}$. Let $\beta >0$ and $\gamma>0$ be such that $2\beta>\gamma$, and let $f: {\mathbb S}^{n}_{+}\to \R$ be defined by $f(X):=\beta \|X||^2+\gamma\ln(1+tr X).$
We consider the set $C:=\{ X\in {\mathbb S}^{n}_{+}:~ g(X)\leq 0\} $, where $g: {\mathbb S}^{n}_{+}\to \R$ is a convex function. The gradient of $f$ is given by 
$$
\nabla f(X)=2\beta X + \frac{1}{1+tr X} {\rm I_n} \in {\mathbb S}^{n}_{++}, \qquad \forall ~X\in {\mathbb S}^{n}_{+}.
$$
Calculating the Hessian of $F$, we obtain
$$
\langle \nabla^2 f(X)V, V\rangle=2\beta \|V\|^2 - \frac{\gamma}{\left(1+tr X\right)^2}(tr V)^2, \quad \forall ~X\in {\mathbb S}^{n}_{+}, \quad \forall ~V\in {\mathbb S}^{n}.
$$
Moreover,  
$0<(2\beta -\gamma)\|V\| ^2\langle \nabla f(X)V, V\rangle\leq 2\beta \|V\| ^2$ for all $X\in {\mathbb S}^{n}_{+}$ and $0\neq V\in {\mathbb S}^{n}$. Thus, Theorems~\ref{th:csl} and \ref{th:csc} imply that $\nabla f$ is Lipschitz continuous with constant $2\beta$, and $f$ is strongly convex with constant $2\beta -\gamma$, respectively. Furthermore, as $C \subset {\mathbb S}^{n}_{+}$, we conclude that $C_\infty \subset {\mathbb S}^{n}_{+}$. Hence, ${\mathbb S}^{n}_{+}={{\mathbb S}^{n}_{+}}^*\subset C_\infty^*$, which implies that ${\mathbb S}^{n}_{++} \subset int(C_\infty^*)$. As $\nabla f(x)\in {\mathbb S}^{n}_{++}$ for all $x\in C$, we conclude that $\nabla f(x) \in int(C_\infty^*)$ for all $x\in C$. Moreover the convexity of $g$ implies that $C$ is convex. Therefore, the problem~\eqref{eq:mp} is convex, and $f$ satisfies conditions {\bf (A)} and {\bf (B)}. In particular, if $g: {\mathbb S}^{n}_{++}\to \R$ given by $g(x)=1/det(X)$ or $g(x)=1/tr(X)$, then the set $C\subset {\mathbb S}^{n}_{++}$ is convex and unbounded.
\end{example}
\section{Conclusions} \label{Sec:Conclusions}
In this study, we considered the classical Frank--Wolf algorithm for nonempty, closed, convex, not necessarily compact constraints. To study its convergence properties, we used recession techniques. The examples in Section 4 demonstrated that the Frank--Wolf algorithm can indeed be applied to several optimization problems (not necessarily convex) with non-compact constraint sets.
\section*{Funding}
The first author was supported in part by CNPq grants 305158/2014-7 and 302473/2017-3, FAPEG/PRONEM- 201710267000532, and CAPES. The second author was supported in part by Funda\c{c}\~{a}o de Apoio \`{a} Pesquisa do Distrito Federal (FAP-DF) through grants 0193.001695/2017 and PDE 05/2018. This research was partly carried out during a visit of the second author to the Center for Mathematical Research (CRM) (while Western Catalonia was in a state of alert), in the framework of the Research-in-pairs call in 2020. The CRM is a paradise for research, and the second author appreciates its hospitality and support.

\end{document}